\theoremstyle{plain}
\newtheorem{thm}{Theorem}[section]
\newtheorem{theorem}[thm]{Theorem}
\newtheorem{lemma}[thm]{Lemma}
\newtheorem{corollary}[thm]{Corollary}
\theoremstyle{definition}
\newtheorem{remark}[thm]{Remark}
\newtheorem{definition}[thm]{Definition}
\numberwithin{equation}{section}
 \title[Pseudo-isomorphisms of threefolds and Monge-Ampere of singular currents]{The Monge-Ampere operator of some singular (1,1) currents coming from pseudo-isomorphisms in dimension $3$}
 \author{Tuyen Trung Truong}
   \address{Department of Mathematics, University of Oslo, Blindern 0851 Oslo, Norway}
  \email{tuyentt@math.uio.no}
    \date{\today}
    \keywords{Meromorphic maps; Positive closed currents; Pullback and Pushforward; Pseudo-automorphisms}
   \subjclass[2010]{32-XX, 37-XX, 14-XX}
\begin{document}
\maketitle

\begin{abstract}
A wide and natural class of closed currents - which are differences of positive closed currents - can be constructed by pulling back smooth closed forms using rational maps. These currents are very singular in general, and hence defining intersections between them is challenging. In this paper, we use our previous results to investigate this question in the case  where the rational maps in question are pseudo-isomorphisms (i.e.  bimeromorphic maps which, along with their inverses, have no exceptional divisors) in dimension $3$. Our main result, to be described in a more concrete form later in the paper, is as follows. 

{\bf Theorem.} Let $X,Y$ be compact K\"ahler manifolds of dimension $3$, and $f:X\dashrightarrow Y$ be a pseudo-isomorphism. Let $\alpha _2,\alpha _3$ be smooth closed $(1,1)$ forms on $Y$, and $T_1$  a difference of two positive closed $(1,1)$ currents on $X$. Then, whether the intersection of the currents $T_1$, $f^*(\alpha _2)$ and $f^*(\alpha _3)$ satisfies a Bedford-Taylor's type monotone convergence depends only on the cohomology classes of $\alpha _2,\alpha _3$. 

Special attention is given to the case where $T_1=f^*(\alpha _1)$ where $\alpha _1$ is a smooth closed $(1,1)$ form on $Y$. It is then shown that satisfying the above mentioned Bedford-Taylor's type monotone convergence is asymmetric in $\alpha _1$, $\alpha _2$ and $\alpha _3$, but in contrast the resulting signed measure is symmetric in $\alpha _1$, $\alpha _2$ and $\alpha _3$. We relate this Bedford-Taylor's type monotone convergence to the least-negative intersection we defined previously. These results can be extended to the case where $\alpha _1$, $\alpha _2$, $\alpha _3$ are more singular. The use of global approximations (instead of local approximations) of singular currents and dynamics of pseudo-isomorphisms in dimension $3$ are essential in proving these results. At the end of the paper we discuss the intersection of currents which are differences of positive closed $(1,1)$ currents in general.   
\end{abstract}

\section{Introduction} 

Intersection of positive closed currents on complex manifolds is a challenging question which is very intensively studied, and which has many applications in different fields such as complex analysis, algebraic geometry and complex dynamics. In this paper, using the approach proposed in our previous papers \cite{truong, truong1, truong2},  we investigate the intersections of singular currents of the form $f^*(\alpha )$, where $\alpha$ is a smooth closed $(1,1)$ form and $f$ is a pseudo-isomorphism in dimension $3$. This class of currents is wide and natural, and can serve to test various definitions of intersection of positive closed currents. (We will show in Theorem \ref{Theorem2} that in general these currents are very singular.)  We recall that a pseudo-isomorphism is a bimeromorphic map $f:X\dashrightarrow Y$, so that both $f$ and $f^{-1}$ have no exceptional divisors. In birational geometry, when singularities are allowed, examples are "small contractions", including flips, which are essential in Mori's minimal model program. Denote by $I(f)$ the indeterminacy set of $f$. In case $\dim (X)=\dim (Y)$ is $3$, a result by Bedford and Kim \cite{bedford-kim} asserts that if $I(f)$ is not empty then its irreducible components are all curves.  

Besides going into detail of the approach which will be used in the current paper, we review briefly the relevance of other approaches to intersection of positive closed currents. There are two recent approaches - named superpotentials and density currents respectively - both proposed by Dinh and Sibony \cite{dinh-sibony, dinh-sibony1, dinh-sibony2}. These two methods are applied very successfully to many questions in complex dynamics and complex geometry, but it is unknown what answers these approaches will provide for the class of currents considered in this paper. There are also other approaches, all based basically on the idea of non-pluripolar intersection by Bedford and Taylor \cite{bedford-taylor2}. However, all methods based on non-pluripolar intersection usually provide answers which are not compatible with intersection in cohomology. The reason is in particular because they throw out - in one way or another - the parts of the currents which are singular, for example analytically singular or worse. For more about these methods, the readers are referred to \cite{truong} and references therein.   

Our approach in this paper is to use that developed in our previous papers, mentioned above. The main idea, for example in defining the intersection of closed currents $T_1,T_2,\ldots$ which are differences of two positive closed currents, is to first write $T_1=\Omega _1+dd^cR_1,~T_2=\Omega _2+dd^cR_2,\ldots $, where $\Omega _1,\Omega _2,\ldots $ are smooth closed forms. Then to find the set of cluster points of sequences $(\Omega _1+dd^cR_{1,n})\wedge (\Omega _2+dd^cR_{2,n})\wedge \ldots $, where $\{R_{1,n}\},\{R_{2,n}\},\ldots $ are sequences of smooth currents which approximate $R_1,R_2,\ldots $ and which satisfy some other good properties ensuring that the sequence is pre-compact. In the good case, it will be shown that there is only one cluster point, and hence it is natural to define the intersection $T_1\wedge T_2\wedge \ldots $ to be that limit. In the general case, where the cluster points are not unique, then we can take an upper bound of all cluster points, if that operator is available (for example, this is the case when the cluster points are signed measures, and has been worked out in \cite{truong}). As can be seen in these papers, and illustrated further in this paper, this approach has the advantage that it allows: 

- Symmetry in the role of the involved currents, and compatibility with intersection in cohomology.  

- The currents $T_1,T_2,\ldots $ to be not positive, and also allow the resulting intersection $T_1\wedge T_2\wedge \ldots $ to be not positive. (This point is at cases, while not always, very important, see the comments right after the statement of Theorem \ref{TheoremMain}.)

- The currents $T_1,T_2,\ldots $ to be very singular, for example having non-zero Lelong numbers on the same subvariety of $X$ of large dimension.

- The case where the intersection in cohomology of the concerned currents is negative.
 
\begin{remark}
We note that in the above description of our approach, the approximations $\{R_{1,n}\}, \{R_{2,n}\},\ldots $ are global, that is, defined on the whole space $X$. This is in contrast to a more common approach of using local approximations, for example in the non-pluripolar intersection mentioned above. In the local approximations, one works on small open sets $U$, on which we can write $T_i=dd^cS_i$ for some current $S_i$, and then approximate $S_i$ in $U$ by $\{S_{i,n}\}$. If $S$ is positive, then there are local approximations of $S_i$ so that $dd^cS_{i,n}\geq 0$ for all $n$, and hence when using such approximations we will only obtain positive closed currents as the results. In particular, such local approximations cannot be used to define self-intersection of positive closed $(1,1)$ currents whose self-intersection in cohomology is negative, for example the one mentioned in Subsection \ref{SubsectionLeastNegativeIntersection}. Also, such local approximations cannot be used to define for example a pullback, which is compatible with the pullback in cohomology, of curves in the indeterminacy set of the map $J_X$ in Theorem \ref{Theorem2}. 

\label{Remark0}\end{remark} 
 
  For intersection of positive closed $(1,1)$ currents in bidegree $(1,1)$ we have refined the above approach to restrict to regularisation sequences of the form $\Omega +dd^c\varphi _n$, where $\varphi _n$ is a decreasing sequence of quasi-psh functions. (Recall that a function $\varphi$ is psh if it is upper-semicontinuous and $dd^c \varphi $ is positive, and an upper-semicontinuous function $\varphi $ is quasi-psh if there is a smooth closed $(1,1)$ form $\Omega$ for which $\Omega +dd^c\varphi $ is positive. Note that these functions are allowed to have the value $-\infty$, but cannot be $-\infty$ identically.) This is to conform to that in general only monotone convergence is possible for intersection of positive closed $(1,1)$ currents. More precisely, we recall the following classical monotone convergence by Bedford and Taylor \cite{bedford-taylor}: 

\begin{theorem}
Let $u_1,\ldots ,u_k$ be locally {\bf bounded} psh functions on the unit disk $\{z\in \mathbb{C}^k:~|z|<1\}$. For each $j$, let $\{u_{j,n}\}_n$ be a sequence of continuous psh functions decreasing point wise to $u_j$. Then the sequence of measures $dd^cu_{1,n}\wedge \ldots \wedge dd^cu_{k,n}$ converges to a unique measure $\mu$, independent of the choice of the regularisation sequences $\{u_{j,n}\}_n$. We can therefore define $dd^cu_{1}\wedge \ldots \wedge dd^cu_{k}:= \mu$. 
\label{TheoremBedfordTaylor}\end{theorem}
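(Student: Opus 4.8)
The plan is to establish Theorem~\ref{TheoremBedfordTaylor} along the classical Bedford--Taylor lines, which rest on three ingredients: an inductive \emph{definition} of the wedge products $dd^c u_{j_1}\wedge\cdots\wedge dd^c u_{j_\ell}$ for locally bounded psh functions; \emph{a priori} mass estimates of Chern--Levine--Nirenberg type; and a monotone convergence statement proved by induction on the number of factors. The assertion is purely local --- everything takes place on the ball $\{|z|<1\}$ in $\C^k$ --- so only local versions of these ingredients are needed, and no global cohomological input enters.

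First I would set up the inductive definition. If $\Theta$ is a closed positive current whose coefficients are (locally finite Borel) measures and $v$ is a locally bounded psh function, then $v\Theta$ is a well-defined current of order $0$ --- a bounded Borel function times a measure-coefficient current --- and one sets $dd^c v\wedge\Theta:=dd^c(v\Theta)$, which is automatically $d$-closed and is checked to be positive. Iterating, starting from the smooth currents $dd^c u_{j,n}$, this defines all the currents $dd^c u_{1}\wedge\cdots\wedge dd^c u_{\ell}$ and likewise for the regularizations. Then come the Chern--Levine--Nirenberg inequalities: for relatively compact $K\subset K'\subset\{|z|<1\}$,
\[
\big\|\, v_1\, dd^c v_2\wedge\cdots\wedge dd^c v_\ell\,\big\|_{K}\ \le\ C_{K,K'}\,\|v_1\|_{L^\infty(K')}\cdots\|v_\ell\|_{L^\infty(K')},
\]
proved by induction via Stokes' theorem and the elementary inequality $dd^c(v^2)\ge 2\,v\,dd^c v$. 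These bounds force the measures $dd^c u_{1,n}\wedge\cdots\wedge dd^c u_{k,n}$ to have uniformly bounded mass on each compact, hence to form a precompact family for the weak topology; so it suffices to show that every subsequential weak limit coincides with the intrinsically defined current $dd^c u_1\wedge\cdots\wedge dd^c u_k$, which at one stroke yields convergence, independence of the regularizing sequences, and the statement as phrased (in top bidegree $(k,k)$ that current is a measure).

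The heart of the proof is the monotone convergence, which I would obtain by induction on the number $\ell$ of factors in the sharpened assertion: \emph{if} $v^0_n\downarrow v^0,\dots,v^\ell_n\downarrow v^\ell$ with the $v^i_n$ continuous psh and the $v^i$ locally bounded psh, \emph{then} $v^0_n\, dd^c v^1_n\wedge\cdots\wedge dd^c v^\ell_n\to v^0\, dd^c v^1\wedge\cdots\wedge dd^c v^\ell$ weakly. The case $\ell=0$ is monotone convergence in $L^1_{\mathrm{loc}}$. For the step, put $\Theta_n=dd^c v^1_n\wedge\cdots\wedge dd^c v^\ell_n$; applying the inductive hypothesis (with $v^1$ in the role of the front function and $\ell-1$ factors) and then $dd^c$ gives $\Theta_n\to\Theta:=dd^c v^1\wedge\cdots\wedge dd^c v^\ell$ weakly, while Chern--Levine--Nirenberg gives uniformly bounded masses, so that in fact $\langle\Theta_n,\psi\rangle\to\langle\Theta,\psi\rangle$ even for $\psi$ a continuous compactly supported test form. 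The easy inequality is then the upper bound: for $\chi\ge0$ and $m$ fixed, $v^0_n\le v^0_m$ once $n\ge m$, so $\limsup_n\langle v^0_n\Theta_n,\chi\rangle\le\lim_n\langle v^0_m\Theta_n,\chi\rangle=\langle v^0_m\Theta,\chi\rangle$, and letting $m\to\infty$ with monotone convergence for $\Theta$ gives $\limsup_n\langle v^0_n\Theta_n,\chi\rangle\le\langle v^0\Theta,\chi\rangle$.

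The matching lower bound $\liminf_n\langle v^0_n\Theta_n,\chi\rangle\ge\langle v^0\Theta,\chi\rangle$ is the step I expect to be the main obstacle: pure semicontinuity is useless, since both the function and the current vary and a decreasing sequence $v^0_n$ does not sit below any fixed \emph{continuous} minorant of the usc function $v^0$ in a usable way, so plurisubharmonicity has to be exploited genuinely. The route I would take is potential-theoretic: integrate by parts (legitimately, because $\Theta_n$ is closed) to rewrite the relevant pairings in terms of the fixed smooth test data and of lower-order products such as $dd^c v^1_n\wedge\cdots\wedge dd^c v^{\ell}_n$ adjoined to a smooth form, to which the inductive hypothesis together with the Chern--Levine--Nirenberg estimates apply, and then pass to the limit; this, combined with the upper bound, pins every subsequential limit to $v^0\Theta$. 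With the convergence $v^0_n\Theta_n\to v^0\Theta$ in hand for all subcollections, a final application of $dd^c$ identifies every weak limit of $dd^c u_{1,n}\wedge\cdots\wedge dd^c u_{k,n}$ with the intrinsically defined measure $\mu:=dd^c u_1\wedge\cdots\wedge dd^c u_k$, which gives the convergence, its independence of the regularizations, and hence the theorem.
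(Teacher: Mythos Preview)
The paper does not give its own proof of Theorem~\ref{TheoremBedfordTaylor}: it is quoted as the classical result of Bedford and Taylor \cite{bedford-taylor}, recalled only to motivate the monotone-convergence philosophy used later. So there is no ``paper's proof'' to compare with.

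That said, your outline is the standard Bedford--Taylor argument (as presented for instance in \cite{demailly1}): inductive definition of $dd^c v\wedge\Theta:=dd^c(v\Theta)$ for $v$ locally bounded psh and $\Theta$ closed positive, Chern--Levine--Nirenberg mass bounds, and induction on the number of factors for the monotone convergence $v^0_n\,dd^c v^1_n\wedge\cdots\wedge dd^c v^\ell_n\to v^0\,dd^c v^1\wedge\cdots\wedge dd^c v^\ell$. Your upper bound step is correct as written. The only place your sketch is thin is exactly where you flag it: the lower bound $\liminf_n\langle v^0_n\Theta_n,\chi\rangle\ge\langle v^0\Theta,\chi\rangle$. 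The honest mechanism here is a repeated integration by parts that symmetrizes the roles of $v^0_n$ and $v^1_n$ (using $\chi\,v^0_n\,dd^c v^1_n\wedge S = \chi\,v^1_n\,dd^c v^0_n\wedge S + d\chi$-terms), after which the inductive hypothesis and CLN estimates control all pieces; alternatively one localizes and invokes quasi-continuity of psh functions with respect to Monge--Amp\`ere capacity. Your paragraph gestures at the first of these but does not actually execute the symmetrization, and ``pure semicontinuity is useless'' is the right diagnosis. If you were submitting this as a proof, that step would need to be written out; as a plan it is on the correct track and matches the literature.
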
 
 
In particular, the intersection of positive closed $(1,1)$ currents with locally bounded potentials is symmetric. For example, in the case of $3$ such currents, we have $dd^cu_1\wedge (dd^c u_2\wedge dd^c u_3)=dd^c u_2\wedge (dd^c u_1\wedge dd^c u_3)$ and so on. This symmetry has been regarded as a sign of whether a definition is a good one. Of course, we do not expect this symmetry in general, if we want to define the intersection of currents using limits of smooth ones and stay in the class of currents or measures. For example, if we have two positive closed currents $T_1$ and $T_2$ so that $T_1\wedge T_2=0$ or more generally a smooth form, then $T_1\wedge (T_1\wedge T_2)$ is obviously defined, while there is in general no way to define $T_1\wedge T_1$  and hence $T_2\wedge (T_1\wedge T_1)$. On the other hand, in \cite{truong}, we showed that this symmetry is preserved if one is ready to go beyond the class of measures to that of submeasures.  
 
In this paper, we will deal with currents of the form $f^*(\alpha )$, where $f$ is a rational map and $\alpha$ is a smooth closed $(1,1)$ form on a compact K\"ahler manifold. Such a current is in general not positive, but is a difference of two positive closed $(1,1)$ currents. In fact, if we write $\alpha =\alpha ^+-\alpha ^-$ where $\alpha ^{\pm}$ are K\"ahler forms, then $f^*(\alpha ^{\pm})$ are positive closed $(1,1)$ currents, and $f^*(\alpha )=f^*(\alpha ^+)-f^*(\alpha ^-)$ is a difference of two positive closed currents. If we write $f^*(\alpha ^{\pm})=\Omega ^{\pm}+dd^cu^{\pm}$, where $\Omega ^{\pm}$ are smooth closed $(1,1)$ forms, then we can write $f^*(\alpha )=\Omega +dd^cu$, where $\Omega =\Omega ^+-\Omega ^-$ is a smooth closed $(1,1)$ form, and $u=u^{+}-u^{-}$ is a difference of two quasi-psh functions. For convenience, we will write $[f^*(\alpha ), \Omega ,u^+,u^-]$ for such a representation of $f^*(\alpha )$. In particular, if $f^*(\alpha )$ is positive, we have a representation of the form $[f^*(\alpha ),\Omega ,u, 0]$, and if we have another representation $[f^*(\alpha ), \widetilde{\Omega},\widetilde{u},0]$ then $dd^c(u-\widetilde{u})=\Omega -\widetilde{\Omega}$ is smooth, and hence $u-\widetilde{u}$ must be smooth. Thus we see that $u$ and $\widetilde{u}$ have the same type of singularities, and it does not matter too much which representation we use. In contrast, two different representations of the general form $[f^*(\alpha ),\Omega ,u^+,u^-]$ and $[f^*(\alpha ),\Omega ,\widetilde{u^+},\widetilde{u^-}]$ need not to have the same type of singularities. In fact, if we have one representation $[f^*(\alpha ), \Omega ,u^+,u^-]$ for $f^*(\alpha )$, then $[f^*(\alpha ), \Omega ,u^++v,u^-+v]$ is also a representation of $f^*(\alpha )$, where $v$ is any quasi-psh function.  If $v$ is very singular, it will destroy the good properties of $(u^+,u^-)$, which is what we do not want. Therefore, when dealing with non positive currents, we need to carefully specify which representation of the current we want to use. If possible, we should choose a representation which is in some sense "least singular". See further discussions in the last part of this paper.   

In the light of the above comment, we modify the above monotone regularisation of quasi-psh functions to monotone regularisation of a pair. Let $(u^+,u^{-})$ be a pair of two quasi-psh functions, defined on a K\"ahler manifold. A sequence of pairs of decreasing continuous quasi-psh functions $(\{u^+_{n}\},\{u^{-}_n\})$ is called a {\bf good approximation} for $(u^+,u^-)$ if: i) There is a K\"ahler form $\omega $ so that $\omega +dd^cu^+_n$ and $\omega +dd^cu^-_n$ are positive closed currents for all $n$, and ii)  $u^+=\lim _{n\rightarrow \infty}u^+_n$ and $u^-=\lim _{n\rightarrow\infty}u^{-}_n $. The existence of such good approximations was proven by Demailly, see Section 2 for more detail. 

Our first main result in this paper is the following, which shows that whether the intersection of currents of the form $f^*(\alpha )$, where $f$ is a fixed pseudo-isomorphism in dimension $3$ and $\alpha$ is a smooth closed $(1,1)$ form satisfying a Bedford-Taylor's type monotone convergence can be characterised cohomologically.  

\begin{theorem} Let $X,Y$ be compact K\"ahler manifolds of dimension $3$, and $f:X\dashrightarrow Y$ a pseudo-isomorphism. Let $\alpha _2,\alpha _3$ be smooth closed $(1,1)$ forms on $Y$, and $T_1$ a difference of two positive closed $(1,1)$ currents on $X$. 

1) Let $[T_1,\Omega _1,u_1^+,u_1^-]$ be a representation of $T_1$. Assume that there is a signed measure $\mu _{[T_1,\Omega _1,u_1^+,u_1^-],f^*(\alpha _2),f^*(\alpha _3)}$ so that for all good approximations $(\{u^+_{1,n}\},\{u^-_{1,n}\})$ of $(u_1^+,u_1^-)$, we have:
\begin{eqnarray*}
\lim _{n\rightarrow\infty}(\Omega _1+dd^c(u^+_{1,n}-u^-_{1,n}))\wedge f^*(\alpha _2)\wedge f^*(\alpha _3)=\mu _{[T_1,\Omega _1,u_1^+,u_1^-],f^*(\alpha _2),f^*(\alpha _3)}. 
\end{eqnarray*}

Then for a smooth closed $(1,1)$ form $\widetilde{\alpha _2}$  cohomologous to $\alpha _2$ and a smooth closed $(1,1)$ form $\widetilde{\alpha _3}$  cohomologous to $\alpha _3$, there is a signed measure $\mu _{[T_1,\Omega _1,u_1^+,u_1^-],f^*(\widetilde{\alpha _2}), f^*(\widetilde{\alpha _3})}$ so that for all good approximation $(\{u^+_{1,n}\},\{u^-_{1,n}\})$ of $u_1$, we have:
\begin{eqnarray*}
\lim _{n\rightarrow\infty}({\Omega _1}+dd^c({u^+_{1,n}}-{u^-_{1,n}}))\wedge f^*(\widetilde{\alpha _2})\wedge f^*(\widetilde{\alpha _3})=\mu _{[T_1,\Omega _1,u_1^+,u_1^-],f^*(\widetilde{\alpha _2}),f^*(\widetilde{\alpha _3})}. 
\end{eqnarray*}

2) Fix $\Omega _1$ a smooth closed $(1,1)$ form cohomologous to $T_1$. Assume that for every representation of $T_1$ of the form $[T_1, \Omega _1, u_1^+,u_1^-]$, where we allow $u_1^{\pm}$ to vary, there is a signed measure $\mu _{[T_1,\Omega _1,u_1^+,u_1^-],f^*(\alpha _2),f^*(\alpha _3)}$ so that for all good approximations $(\{u^+_{1,n}\},\{u^-_{1,n}\})$ of $(u_1^+,u_1^-)$, we have:
\begin{eqnarray*}
\lim _{n\rightarrow\infty}(\Omega _1+dd^c(u^+_{1,n}-u^-_{1,n}))\wedge f^*(\alpha _2)\wedge f^*(\alpha _3)=\mu _{[T_1,\Omega _1,u_1^+,u_1^-],f^*(\alpha _2),f^*(\alpha _3)}. 
\end{eqnarray*}
  
Then the measures $\mu _{[T_1,\Omega _1,u_1^+,u_1^-],f^*(\alpha _2),f^*(\alpha _3)}$ are independent of the choice of $u_1^{\pm}$ and hence we can write it as $\mu _{T_1,\Omega _1,f^*(\alpha _2),f^*(\alpha _3)}$.  Moreover,  for every representation $[T_1, \widetilde{\Omega _1},\widetilde{u_1^+}, \widetilde{u_1^-}]$, and  for all good approximation $(\{\widetilde{u^+_{1,n}}\},\{\widetilde{u^-_{1,n}}\})$ of $(\widetilde{u_1^+}, \widetilde{u_1^-})$, we have:
\begin{eqnarray*}
\lim _{n\rightarrow\infty}(\widetilde{\Omega _1}+dd^c(\widetilde{u^+_{1,n}}-\widetilde{u^-_{1,n}}))\wedge f^*(\alpha _2)\wedge f^*(\alpha _3)=\mu _{T_1,\Omega _1,f^*(\alpha _2),f^*(\alpha _3)}. 
\end{eqnarray*}
Hence, the signed measure $\mu _{T_1,\Omega _1,f^*(\alpha _2),f^*(\alpha _3)}$ is independent of the choice of $\Omega _1$, and we can write it as $\mu _{T_1,f^*(\alpha _2),f^*(\alpha _3)}$.
\label{TheoremMain}\end{theorem}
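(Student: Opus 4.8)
The plan is to fix, once and for all, a resolution of $f$: a compact K\"ahler manifold $Z$ with holomorphic maps $\pi :Z\to X$ and $\sigma :Z\to Y$ such that $f=\sigma \circ \pi ^{-1}$; since $f$ is a pseudo-isomorphism it has no exceptional divisors, so $f^*=\pi _*\sigma ^*$ on smooth forms and on their products does not depend on this choice. I would split the statement into three invariance claims: (P1) the conclusion of part 1); (P2a) that, with $\Omega _1$ fixed, the measure in part 2) does not depend on $u_1^{\pm}$; and (P2b) that it does not depend on $\Omega _1$ either. Claim (P2b) reduces to (P2a): if $[T_1,\widetilde{\Omega _1},\widetilde{u_1^+},\widetilde{u_1^-}]$ is a second representation, the $dd^c$-lemma on $X$ gives $\Omega _1-\widetilde{\Omega _1}=dd^ch$ with $h\in C^\infty (X)$, and for matching good approximations $\widetilde{\Omega _1}+dd^c(\widetilde{u^+_{1,n}}-\widetilde{u^-_{1,n}})=\Omega _1+dd^c((\widetilde{u^+_{1,n}}-h)-\widetilde{u^-_{1,n}})$ is literally the same sequence of currents, so (P2a) applied to the representation $[T_1,\Omega _1,\widetilde{u_1^+}-h,\widetilde{u_1^-}]$ finishes the argument.

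For (P1): write $\widetilde{\alpha _i}=\alpha _i+dd^c\varphi _i$ with $\varphi _i\in C^\infty (Y)$ for $i=2,3$; since $\alpha _2,\alpha _3$ are closed, a direct computation gives $\widetilde{\alpha _2}\wedge \widetilde{\alpha _3}=\alpha _2\wedge \alpha _3+dd^c\psi$ with $\psi :=\varphi _3\alpha _2+\varphi _2\alpha _3+\varphi _2\, dd^c\varphi _3$ a smooth $(1,1)$ form on $Y$, so on $Z$ one has $\sigma ^*(\widetilde{\alpha _2}\wedge \widetilde{\alpha _3})=\sigma ^*(\alpha _2\wedge \alpha _3)+dd^c(\sigma ^*\psi )$, a change by $dd^c$ of a smooth form. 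Setting $R_n:=\Omega _1+dd^c(u^+_{1,n}-u^-_{1,n})$ and using (see below) that $R_n\wedge f^*(\beta _2)\wedge f^*(\beta _3)$ is computed by pulling $R_n$ back under $\pi$, wedging with $\sigma ^*(\beta _2\wedge \beta _3)$ on $Z$, and pushing forward under $\pi$, the product $R_n\wedge f^*(\widetilde{\alpha _2})\wedge f^*(\widetilde{\alpha _3})$ splits into the sequence hypothesised to converge for $(\alpha _2,\alpha _3)$ plus the term $\pi _*\big(\pi ^*R_n\wedge dd^c(\sigma ^*\psi )\big)$. For the latter, $u^+_{1,n}-u^-_{1,n}\to u_1^+-u_1^-$ in $L^1$ (monotone convergence of each piece), hence $\pi ^*R_n\to \pi ^*T_1$ weakly on $Z$; wedging a weakly convergent sequence of $(1,1)$ currents with the fixed smooth $(2,2)$ form $dd^c(\sigma ^*\psi )$ and pushing forward is weakly continuous, so this term converges as well, and neither limit depends on the chosen good approximation. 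This proves (P1).

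For (P2a): if $[T_1,\Omega _1,u_1^+,u_1^-]$ and $[T_1,\Omega _1,v_1^+,v_1^-]$ both represent $T_1$, then $dd^c\big((u_1^+-u_1^-)-(v_1^+-v_1^-)\big)=0$; the bracketed function is a difference of quasi-psh functions, hence in $L^1$, and a pluriharmonic distribution on a compact K\"ahler manifold is constant, so $v_1^+-v_1^-=(u_1^+-u_1^-)+c$ for a constant $c$. For arbitrary good approximations put $w_n:=u^+_{1,n}-u^-_{1,n}$ and $w_n':=v^+_{1,n}-v^-_{1,n}$; then $w_n\to u_1^+-u_1^-$ and $w_n'\to u_1^+-u_1^-+c$ in $L^1$, so $\pi ^*\big((\Omega _1+dd^cw_n)-(\Omega _1+dd^cw_n')\big)=dd^c\big((w_n-w_n')\circ \pi \big)\to 0$ weakly on $Z$. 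Wedging with the fixed smooth form $\sigma ^*(\alpha _2\wedge \alpha _3)$ and pushing forward, the sequences $(\Omega _1+dd^cw_n)\wedge f^*(\alpha _2)\wedge f^*(\alpha _3)$ and $(\Omega _1+dd^cw_n')\wedge f^*(\alpha _2)\wedge f^*(\alpha _3)$ have the same limit; this is the independence of $u_1^{\pm}$, and with (P2b) it gives the final statement that the measure depends only on $T_1$, $f^*(\alpha _2)$ and $f^*(\alpha _3)$.

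The real obstacle is the compatibility used without proof above: that the paper's triple intersection $(\Omega _1+dd^c(u^+_{1,n}-u^-_{1,n}))\wedge f^*(\alpha _2)\wedge f^*(\alpha _3)$ is indeed computed by $\pi _*\big(\pi ^*(\Omega _1+dd^c(u^+_{1,n}-u^-_{1,n}))\wedge \sigma ^*(\alpha _2\wedge \alpha _3)\big)$ — equivalently, that it is compatible with the projection formula and with the cohomologically-correct pullback, so that $f^*(\alpha _2)\wedge f^*(\alpha _3)=f^*(\alpha _2\wedge \alpha _3)$ and the fixed factor $f^*(\alpha _2)\wedge f^*(\alpha _3)$ really behaves like the pushforward of a smooth form. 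This is where the hypotheses on $f$ enter: having no exceptional divisor makes $\pi _*\sigma ^*$ independent of the resolution, and by Bedford and Kim \cite{bedford-kim} the indeterminacy locus of $f$ in dimension $3$ is a union of curves, which is precisely what keeps $f^*(\alpha _i)$ and their products inside the class to which the monotone-convergence machinery of \cite{truong, truong1, truong2} applies; moreover the \emph{global} nature of the approximations $\{u^{\pm}_{1,n}\}$, as opposed to local ones, is what lets them interact correctly with the part of $f^*(\alpha _2)\wedge f^*(\alpha _3)$ carried on those curves. Working directly on $X$ rather than on $Z$, the same difficulty reappears as the need to show that the continuous functions $w_n-w_n'$ — which tend to the constant $-c$ only in $L^1$ and Lebesgue-almost everywhere — do not concentrate on the curves of $I(f)$ against the measure $f^*(\alpha _2)\wedge f^*(\alpha _3)\wedge \beta$ for test forms $\beta$; that non-concentration is forced by the standing hypothesis of part 2), since applying it to representations with cancelling logarithmic poles along an arbitrary curve $C$ shows that $f^*(\alpha _2)\wedge f^*(\alpha _3)$ cannot charge $C$.
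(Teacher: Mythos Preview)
Your argument rests on the ``compatibility'' claim that
\[
R_n\wedge f^*(\alpha _2)\wedge f^*(\alpha _3)\;=\;\pi _*\bigl(\pi ^*R_n\wedge \sigma ^*(\alpha _2\wedge \alpha _3)\bigr),
\]
equivalently $f^*(\alpha _2)\wedge f^*(\alpha _3)=f^*(\alpha _2\wedge \alpha _3)$. This is \emph{false} in general: the paper's own Theorem~\ref{Theorem2} computes, for the Cremona lift $J_X$, that
\[
J_X^*(\alpha _2)\wedge J_X^*(\alpha _3)=J_X^*(\alpha _2\wedge \alpha _3)+\sum _{i,j}\{\alpha _2.\widetilde{C_{i,j}}\}\{\alpha _3.\widetilde{C_{i,j}}\}\,[\widetilde{C_{3-i,3-j}}],
\]
and the extra curve terms are typically nonzero. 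What the paper actually proves (Lemma~\ref{Lemma1}) is the weaker statement that the defect $f^*(\alpha _2)\wedge f^*(\alpha _3)-f^*(\alpha _2\wedge \alpha _3)$ is supported on $I(f)$ and depends only on the cohomology classes of $\alpha _2,\alpha _3$. That lemma is the missing ingredient.

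For (P1) the gap is repairable once you have Lemma~\ref{Lemma1}: since the curve terms depend only on cohomology, replacing $(\alpha _2,\alpha _3)$ by cohomologous $(\widetilde{\alpha _2},\widetilde{\alpha _3})$ makes the defects cancel, and the remaining difference $f^*(\widetilde{\alpha _2})\wedge f^*(\widetilde{\alpha _3})-f^*(\alpha _2)\wedge f^*(\alpha _3)$ really is $f^*(dd^c\psi )=\pi _*dd^c(\sigma ^*\psi )$, so your weak-convergence argument for the extra term goes through. The paper phrases this as: $\widetilde{\alpha _2}-\alpha _2$ is cohomologically trivial, hence satisfies NIC, hence by Theorem~\ref{TheoremSufficientCondition} the triple $T_1,f^*(\widetilde{\alpha _2}-\alpha _2),f^*(\alpha _3)$ satisfies BTC with limit $T_1\wedge f^*((\widetilde{\alpha _2}-\alpha _2)\wedge \alpha _3)$. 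But as written, your (P1) asserts the false identity and does not invoke this cancellation.

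For (P2a) your $L^1$ argument cannot work as stated: the measure $f^*(\alpha _2)\wedge f^*(\alpha _3)$ in general \emph{does} charge curves in $I(f)$, and $L^1$-convergence of $w_n-w_n'$ to a constant says nothing about integrals against such singular measures. Your final sentence --- using representations with cancelling logarithmic poles along each $C\subset I(f)$ to force $\lambda _C=0$ --- is exactly the mechanism the paper uses (see the proof of Theorem~\ref{TheoremSufficientCondition}, part~2, and Claims~1--2 in the proof of Theorem~\ref{TheoremMain}), but you leave it as an unproved assertion. The paper carries it out: one adds a quasi-psh $v$ with $v\to -\infty$ on $I(f)$ to both $u_1^{\pm}$, chooses approximations for which $dd^cv_n\wedge [C]\to 0$, and then varies $\Omega _1$ to conclude that each $\lambda _C=0$, so that in fact $f^*(\alpha _2)\wedge f^*(\alpha _3)=f^*(\alpha _2\wedge \alpha _3)$ under the hypotheses of part~2. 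Your reduction (P2b)$\Rightarrow$(P2a) via the $dd^c$-lemma is correct and matches the paper.
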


We note that in Theorem \ref{TheoremMain}, the intersection $f^*(\alpha _2)\wedge f^*(\alpha _3)$ is well-defined, since the currents $f^*(\alpha _2)$ and $f^*(\alpha _3)$ are smooth outside of curves in $I(f)$. Moreover, the intersection $f^*(\alpha _2)\wedge f^*(\alpha _3)$ itself satisfies a Bedford-Taylor's monotone convergence, see \cite{fornaess-sibony, demailly2}. Hence, having the existence of limits as in the statement of the theorem, it is reasonable to define the intersection of $T_1$, $f^*(\alpha _2) $ and $f^*(\alpha _3)$ to be that limit, in accordant to the classical cases (see Theorem \ref{TheoremBedfordTaylor} and comments right below it). 

We remark that in works by other authors, mentioned above, only intersection of positive closed currents is considered. This cannot be directly applied in the situation considered here in general. In fact, consider the special case where $T_i=f^*(\alpha _i)$ ($i=1,2,3$) for a pseudo-isomorphism $f:X\dashrightarrow Y$, where $\alpha _i$ is a smooth closed $(1,1)$ form on $Y$.  One natural way to reduce to the situation of intersection of positive closed currents is to write $\alpha _i=\alpha _i^+ -\alpha _i^{-}$ (for $i=1,2,3$), where $\alpha _i^{\pm}$ are positive closed smooth $(1,1)$ forms (i.e. K\"ahler forms), and then try to define $f^*(\alpha _1^{\pm})\wedge f^*(\alpha _2^{\pm})\wedge f^*(\alpha _3^{\pm})$ separately. However, as seen in Theorem \ref{Theorem2} below, such an approach cannot be successful, since in general $f^*(\alpha _1^{\pm})$, $f^*(\alpha _2^{\pm})$ and  $f^*(\alpha _3^{\pm})$ will all have non-zero Lelong numbers simultaneously along some curves in $I(f)$. 

The main idea for the proof of Theorem \ref{TheoremMain} is to first prove Theorem \ref{TheoremSufficientCondition} below. Since in general a positive closed smooth $(1,1)$ form $\alpha$ will never satisfy the cohomological condition (the NIC condition below) in Theorem \ref{TheoremSufficientCondition}, we see again that here it is essential to work with non-positive forms $\alpha$, and hence with non-positive currents $f^*(\alpha )$.   

In view of the above theorem, we define the following two notions of Bedford-Taylor's type monotone convergence. 

\begin{definition}
Let $X$ be a compact K\"ahler $3$-fold. Let $T_1,T_2,T_3$ be differences of positive closed $(1,1)$ currents on $X$, so that $T_1\wedge T_2$, $T_1\wedge T_3$ and $T_2\wedge T_3$ are well-defined in the sense of \cite{fornaess-sibony, demailly2}.  

{\bf Weak BTC} (for Weak Bedford-Taylor monotone convergence).  $T_1$, $T_2$ and $T_3$ are said to satisfy the Weak BTC condition if there is a representation $[T_1,\Omega _1,u_1^+,u_1^-]$ and a signed measure $\mu _{[T_1,\Omega _1,u_1^+,u_1^-],T _2,T_3}$ so that the following condition is satisfied. For all good approximations $(\{u^+_{1,n}\},\{u^-_{1,n}\})$ of $(u_1^+,u_1^-)$, we have:

\begin{eqnarray*}
\lim _{n\rightarrow\infty}(\Omega _1+dd^c(u^+_{1,n})-dd^c(u^-_{1,n}))\wedge T_2\wedge T_3=\mu _{[T_1,\Omega _1,u_1^+,u_1^-], T_2,T_3}. 
\end{eqnarray*}

{\bf BTC} (for Bedford-Taylor's monotone convergence).  $T_1$, $T_2$ and $T_3$ are said to satisfy the BTC condition if there is a signed measure $\mu _{T_1,T_2,T_3}$ so that for all representations $[T_1, \Omega _1,u_1^+,u_1^-]$ and for all good approximations $(\{u^+_{1,n}\},\{u^-_{1,n}\})$ of $(u_1^+,u_1^-)$, we have:
\begin{eqnarray*}
\lim _{n\rightarrow\infty}(\Omega _1+dd^c(u^+_{1,n}-u^-_{1,n}))\wedge T_2\wedge T_3=\mu _{T_1,T_2,T_3}. 
\end{eqnarray*}

\label{DefinitionBTC}\end{definition}

Theorem \ref{TheoremMain} says that when $T_i=f^*(\alpha _i)$ as above, both the BTC condition and the Weak BTC condition depend only on the cohomology classes of $\alpha _2$ and $\alpha _3$. The BTC condition is the strongest form of intersection satisfying a Bedford-Taylor's type monotone convergence between (non)-positive closed $(1,1)$ currents we can expect. Theorem \ref{Theorem2} below shows that even for this special case, the (Weak) BTC condition is not always satisfied. 

The next cohomological condition provides an easy-to-check  criterion for the BTC condition. 

\begin{definition}
{\bf NIC} (for Null intersection with indeterminacy curves). Let $f:X\dashrightarrow Y$ be a pseudo-isomorphism in dimension $3$. A cohomology class $\gamma\in H^{1,1}(X)$ is said to satisfy the NIC condition if for all curves $C\subset I(f^{-1})$ we have in cohomology: $\gamma .\{C\}=0$. 
\label{DefinitionNIC}\end{definition}

The next result characterises the BTC condition.  

\begin{theorem} Let $X,Y$ be compact K\"ahler manifolds of dimension $3$, and $f:X\dashrightarrow Y$ a pseudo-isomorphism. Let $\alpha _1,\alpha _2,\alpha _3$ be smooth closed $(1,1)$ forms on $Y$. Let $T_1$ be a difference of two positive closed $(1,1)$ currents on $X$. 

1)  There exists a signed measure, denoted by $T_1\wedge f^*(\alpha _2\wedge \alpha _3)$, so that for every representation $[T_1, \Omega _1, u_1^+,u_1^-]$ and for all good approximations $(\{u^+_{1,n}\},\{u^-_{1,n}\})$ of $(u_1^+,u_1^-)$, we have: 
\begin{eqnarray*}
\lim _{n\rightarrow\infty}(\Omega +dd^cu_{1,n}^+-dd^cu_{1,n}^{-})\wedge f^*(\alpha _2\wedge \alpha _3)=T_1\wedge f^*(\alpha _2\wedge \alpha _3).
\end{eqnarray*}

2) If $T_1,f^*(\alpha _2),f^*(\alpha _3)$ satisfy the BTC condition, then $\mu _{T_1,f^*(\alpha _2),f^*(\alpha _3)}=T_1\wedge f^*(\alpha _2\wedge \alpha _3)$. In particular, if $T_1=f^*(\alpha _1)$, then $\mu _{f^*(\alpha _1),f^*(\alpha _2),f^*(\alpha _3)}=f^*(\alpha _1\wedge \alpha _2\wedge \alpha _3)$ and we must have in cohomology $\{f^*(\alpha _1)\}.\{f^*(\alpha _2)\}.\{f^*(\alpha _3)\}=\{f^*(\alpha _1\wedge \alpha _2\wedge \alpha _3)\}$. 

3) $T_1$, $f^*(\alpha _2)$ and $f^*(\alpha _3)$  satisfy the BTC condition iff $f^*(\alpha _2)\wedge f^*(\alpha _3)=f^*(\alpha _2\wedge \alpha _3)$. The latter condition depends only on the cohomology classes of $\alpha _2$ and $\alpha _3$. 

In particular, if at least one of the cohomology classes $\{\alpha _2\}$ and $\{\alpha _3\}$ satisfies the NIC condition, then $T_1$, $f^*(\alpha _2)$ and $f^*(\alpha _3)$ satisfies the BTC condition. 

4) On the other hand, if $T_1$ is smooth then $T_1$, $f^*(\alpha _2)$ and $f^*(\alpha _3)$ always satisfy the Weak BTC condition for every $\alpha _2$ and $\alpha _3$. 
\label{TheoremSufficientCondition}\end{theorem}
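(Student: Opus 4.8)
The argument rests on one structural input, call it (A): \emph{for every smooth closed $(p,p)$ form $\beta$ on $Y$, the pull-back $f^*(\beta)$ is the trivial extension across $I(f)$ of the smooth form $(f|_{X\setminus I(f)})^*\beta$}; in particular its trace measure charges no pluripolar set, $f^*$ commutes with $dd^c$, and on $X\setminus I(f)$ the currents $f^*(\alpha_i\wedge\alpha_j)$, $f^*(\alpha_1\wedge\alpha_2\wedge\alpha_3)$ coincide with the naive products of the $f^*(\alpha_i)$; moreover $T_1\wedge f^*(\alpha_2\wedge\alpha_3)$ is identified, when $T_1=f^*(\alpha_1)$, with $f^*(\alpha_1\wedge\alpha_2\wedge\alpha_3)$. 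This is proved on the graph $\Gamma\subset X\times Y$: by Bedford--Kim \cite{bedford-kim} the components of $I(f)$ are curves, so the $\pi_1$-exceptional divisors are sent by $\pi_2$ into subvarieties of $Y$ of dimension $<p$, hence $\pi_2^*\beta$ restricts to $0$ on them and $\pi_{1*}$ creates no extra mass (if available verbatim, this is in \cite{truong1}). Granting (A), Part 1 is the classical Bedford--Taylor/Fornaess--Sibony monotone convergence in disguise: write $\alpha_2\wedge\alpha_3=\beta^+-\beta^-$ with $\beta^\pm$ smooth strongly positive closed $(2,2)$ forms (add a large multiple of a squared Kähler form); then $f^*(\beta^\pm)$ are positive closed $(2,2)$ currents whose trace measures do not charge pluripolar sets, so for continuous quasi-psh $u^\pm_{1,n}\downarrow u^\pm_1$ the currents $u^\pm_{1,n}f^*(\beta^\pm)$ decrease to the well-defined order-zero currents $u^\pm_1 f^*(\beta^\pm)$, and expanding $(\Omega_1+dd^cu^+_{1,n}-dd^cu^-_{1,n})\wedge f^*(\alpha_2\wedge\alpha_3)$ yields a limit depending only on $\Omega_1$ and $u^\pm_1$. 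Passing to another representation changes $(u^+_1,u^-_1)$ only by a common quasi-psh function --- so the $dd^c$-terms cancel in $dd^cu^+_{1,n}-dd^cu^-_{1,n}$ --- and changes $\Omega_1$ only by an exact smooth form, absorbed via $T_1=\Omega_1+dd^c(u^+_1-u^-_1)$; this defines $T_1\wedge f^*(\alpha_2\wedge\alpha_3)$.

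For Part 3, set $R:=f^*(\alpha_2)\wedge f^*(\alpha_3)-f^*(\alpha_2\wedge\alpha_3)$; by \cite{fornaess-sibony, demailly2} the first term is a closed order-zero $(2,2)$ current, and by (A) both terms agree on $X\setminus I(f)$, so $R$ is a closed order-zero $(2,2)$ current supported on the curves of $I(f)$, hence $R=\sum_i c_i[C_i]$ with constant $c_i\in\R$ and $C_i\subset I(f)$. Consequently $(\Omega_1+dd^cu^+_{1,n}-dd^cu^-_{1,n})\wedge f^*(\alpha_2)\wedge f^*(\alpha_3)$ equals the same expression with $f^*(\alpha_2\wedge\alpha_3)$ in place of $f^*(\alpha_2)\wedge f^*(\alpha_3)$ --- which converges by Part 1 to $T_1\wedge f^*(\alpha_2\wedge\alpha_3)$, independently of the choices --- plus $\sum_i c_i\int_{C_i}(u^+_{1,n}-u^-_{1,n})\,dd^c(\cdot)$ (one integration by parts, legal since the $u^\pm_{1,n}$ are continuous). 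If $R=0$ this correction is vacuous and BTC holds with $\mu_{T_1,f^*(\alpha_2),f^*(\alpha_3)}=T_1\wedge f^*(\alpha_2\wedge\alpha_3)$, which is the ``if'' of the iff; taking total masses (the approximants all have mass the fixed cohomological number $\{T_1\}.\{f^*(\alpha_2)\}.\{f^*(\alpha_3)\}$) gives Part 2, and for $T_1=f^*(\alpha_1)$, (A) identifies the limit with $f^*(\alpha_1\wedge\alpha_2\wedge\alpha_3)$ and yields the stated cohomological identity. If instead some $c_{i_0}\neq0$, choose the representation $[T_1,\Omega_1,u^+_1+w,u^-_1+w]$ with $w$ a quasi-psh function having a prescribed logarithmic pole along $C_{i_0}$ (equal to $\log(|g_1|^2+|g_2|^2)$ near $C_{i_0}$, with $g_1,g_2$ local equations) and smooth off a neighbourhood of $C_{i_0}$, and two good approximations whose $w$-parts are $\log(|g|^2+1/n)$ and $\log(|g|^2+1/n^2)$ near $C_{i_0}$: their difference grows like $\log n$ along $C_{i_0}$, so $\int_{C_{i_0}}(u^+_{1,n}-u^-_{1,n})\,dd^c\phi\to\pm\infty$ for a test form $\phi$ with $\int_{C_{i_0}}dd^c\phi\neq0$, contradicting BTC. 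Hence BTC $\iff R=0\iff f^*(\alpha_2)\wedge f^*(\alpha_3)=f^*(\alpha_2\wedge\alpha_3)$.

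That $R=0$ is a cohomological condition follows since replacing $\alpha_2$ by $\alpha_2+dd^cg$ with $g$ smooth changes $f^*(\alpha_2)$ by $dd^c(g\circ f)$ and $f^*(\alpha_2\wedge\alpha_3)$ by $dd^c((g\circ f)f^*(\alpha_3))$ (using that $f^*$ commutes with $dd^c$, valid because $f$ has no exceptional divisor), so both terms of $R$ change by the same exact current; likewise in $\alpha_3$. For the NIC clause, the $c_i$ are bilinear in $(\{\alpha_2\},\{\alpha_3\})$, and a local computation near each $C_i$ using the Bedford--Kim normal form (the Atiyah-flop model, where $\pi_1,\pi_2$ contract the two rulings of an exceptional $\mathbb{P}^1\times\mathbb{P}^1$) shows $c_i$ is a multiple of $(\{\alpha_2\}.\{C_i'\})(\{\alpha_3\}.\{C_i'\})$, where $C_i'\subset I(f^{-1})$ is the curve matching $C_i$; hence NIC for $\{\alpha_2\}$ or $\{\alpha_3\}$ forces all $c_i=0$, i.e.\ $R=0$, and BTC holds. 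Finally Part 4: if $T_1$ is smooth, use the representation $[T_1,T_1,0,0]$; a good approximation of $(0,0)$ consists of continuous quasi-psh $u^\pm_{1,n}\downarrow0$, so $0\le u^\pm_{1,n}\le u^\pm_{1,1}$ are uniformly bounded, whence $u^\pm_{1,n}\,(f^*(\alpha_2)\wedge f^*(\alpha_3))\to0$ by dominated convergence and $(T_1+dd^cu^+_{1,n}-dd^cu^-_{1,n})\wedge f^*(\alpha_2)\wedge f^*(\alpha_3)\to T_1\wedge f^*(\alpha_2)\wedge f^*(\alpha_3)$ independently of the approximation, which is the Weak BTC condition for this representation.

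The main obstacle is the ``$\Rightarrow$'' of Part 3: pinning down that $R$ is a combination of integration currents with \emph{constant} coefficients governed precisely by the indeterminacy curves of $f^{-1}$, and turning the divergent pair of good approximations into a genuine global construction --- both requiring the fine local geometry of pseudo-isomorphisms in dimension $3$. Establishing (A) in the generality used above (in particular the identification of $T_1\wedge f^*(\alpha_2\wedge\alpha_3)$ with $f^*(\alpha_1\wedge\alpha_2\wedge\alpha_3)$ when $T_1=f^*(\alpha_1)$) is the other delicate point if it cannot be cited verbatim.
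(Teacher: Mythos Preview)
Your overall architecture matches the paper's: both routes hinge on the decomposition
\[
f^*(\alpha_2)\wedge f^*(\alpha_3)=f^*(\alpha_2\wedge\alpha_3)+R,\qquad R=\sum_{C\subset I(f)}\lambda_C[C],
\]
and on showing BTC $\iff R=0$. For Part~1 your argument is a genuine alternative to the paper's: you stay on $X$ and use that $f^*(\beta^{\pm})$ puts no mass on pluripolar sets to run monotone convergence on $u^{\pm}_{1,n}f^*(\beta^{\pm})$, whereas the paper pushes forward to $Y$ and invokes the DSH pull--push machinery of \cite{truong1,truong2} to show $\lim_n f_*(\varphi T_n)$ exists. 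Your route is lighter once (A) is granted, but (A) itself (in particular that $f^*(\alpha_2\wedge\alpha_3)$ charges no curve in $I(f)$) is precisely what the paper extracts from \cite{truong1,truong2}; you should cite that rather than assert it. One minor slip: passing between two representations with the same $\Omega_1$ changes $(u_1^+,u_1^-)$ by a common \emph{difference of quasi-psh functions}, not necessarily a quasi-psh function---your conclusion survives because only $u_1^+-u_1^-$ enters the limit, but the wording should be corrected.

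The substantive gap is in your ``only if'' for Part~3. You want to contradict BTC when some $\lambda_{C_{i_0}}\neq0$ by producing good approximations with $\int_{C_{i_0}}(u^+_{1,n}-u^-_{1,n})\,dd^c\phi\to\pm\infty$ for a test function $\phi$ with $\int_{C_{i_0}}dd^c\phi\neq0$. But $C_{i_0}$ is a compact curve without boundary, so $\int_{C_{i_0}}dd^c\phi=0$ for \emph{every} smooth $\phi$; no such $\phi$ exists, and your divergence never occurs (indeed the constant $\log n$ you manufacture along $C_{i_0}$ integrates to zero against $dd^c\phi$). The paper's mechanism is different and is what you are missing: one first constructs (via $\max\{u_1^{\pm},-n\}$ for potentials with $u_1^{\pm}\to-\infty$ on $I(f)$, smoothed through Lemma~3.7 of \cite{truong}) a good approximation whose limit equals
\[
T_1\wedge f^*(\alpha_2\wedge\alpha_3)+\Omega_1\wedge R,
\]
and then exploits the freedom of replacing $\Omega_1$ by $\Omega_1+dd^cv$ with $v$ smooth. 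BTC forces the limit to be independent of $\Omega_1$, hence $dd^cv\wedge R=0$ for all smooth $v$, which kills every $\lambda_C$. So the contradiction comes from varying $\Omega_1$, not from making approximations diverge.

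Your NIC argument is also thinner than the paper's. You invoke an ``Atiyah-flop normal form'' for the local geometry near $I(f)$, but Bedford--Kim only gives that $I(f)$ consists of curves; the paper instead proves (Lemma~\ref{Lemma1}) by factoring a resolution $\pi=\pi_m\circ\cdots\circ\pi_1$ into point and curve blowups and tracking the discrepancy at each step (Equation~(\ref{Equation1})) that each $\lambda_C$ is a sum of products $\{h^*\alpha_2.F_j\}\{h^*\alpha_3.F_j\}$, whence NIC for either class forces $R=0$. Your bilinear intuition is correct, but the blowup bookkeeping is what actually delivers it.
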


As a consequence, we obtain the following result concerning the symmetry of the BTC condition. 

\begin{corollary}
Let $f:X\dashrightarrow Y$ be a pseudo-isomorphism in dimension $3$. Let $\alpha _1$, $\alpha _2$ and $\alpha _3$ be smooth closed $(1,1)$ forms on $Y$.

1) The BTC condition is asymmetric in arguments. More precisely, there are $\alpha _1$, $\alpha _2$ and $\alpha _3$ such that $f^*(\alpha _1)$, $f^*(\alpha _2)$ and $f^*(\alpha _3)$ satisfy the BTC condition, but $f^*(\alpha _2)$, $f^*(\alpha _1)$ and $f^*(\alpha _3)$ do not satisfy the BTC condition. 

2) On the other hand, the resulting measure is symmetric in arguments. More precisely, if $f^*(\alpha _1)$, $f^*(\alpha _2)$, $f^*(\alpha _3)$ satisfy the BTC condition, then $\mu _{f^*(\alpha _1), f^*(\alpha _2),f^*(\alpha _3)}$ is symmetric in $\alpha _1,\alpha _2,\alpha _3$.   
\label{Corollary1}\end{corollary}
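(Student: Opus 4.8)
The plan is to obtain both statements as essentially formal consequences of Theorem \ref{TheoremSufficientCondition}, the only genuine input being one explicit example, which Theorem \ref{Theorem2} supplies. Part 2) will be immediate; in part 1) the work is in choosing the three forms correctly.

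\textbf{Part 2).} Suppose $f^*(\alpha _1)$, $f^*(\alpha _2)$, $f^*(\alpha _3)$ satisfy the BTC condition, with $f^*(\alpha _1)$ in the role of $T_1$. Applying part 2) of Theorem \ref{TheoremSufficientCondition} with $T_1=f^*(\alpha _1)$ yields
\[
\mu _{f^*(\alpha _1),f^*(\alpha _2),f^*(\alpha _3)}=f^*(\alpha _1\wedge \alpha _2\wedge \alpha _3).
\]
Since $\alpha _1,\alpha _2,\alpha _3$ are differential forms of even degree, they commute pairwise for the wedge product, so $\alpha _1\wedge \alpha _2\wedge \alpha _3=\alpha _{\sigma (1)}\wedge \alpha _{\sigma (2)}\wedge \alpha _{\sigma (3)}$ for every permutation $\sigma$ of $\{1,2,3\}$; hence the signed measure $f^*(\alpha _1\wedge \alpha _2\wedge \alpha _3)$ is unchanged under permutation of the arguments. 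Thus whenever the BTC condition holds for some ordering of the three currents the resulting measure is this one symmetric object, and in particular any two orderings for which the BTC condition holds produce the same measure. That is the assertion.

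\textbf{Part 1).} Here I would use the characterisation in part 3) of Theorem \ref{TheoremSufficientCondition}: taking $T_1=f^*(\alpha _1)$ there, $f^*(\alpha _1),f^*(\alpha _2),f^*(\alpha _3)$ satisfy the BTC condition if and only if $f^*(\alpha _2)\wedge f^*(\alpha _3)=f^*(\alpha _2\wedge \alpha _3)$, while taking $T_1=f^*(\alpha _2)$, the triple $f^*(\alpha _2),f^*(\alpha _1),f^*(\alpha _3)$ satisfies the BTC condition if and only if $f^*(\alpha _1)\wedge f^*(\alpha _3)=f^*(\alpha _1\wedge \alpha _3)$. So it suffices to exhibit $\alpha _1,\alpha _2,\alpha _3$ with
\[
f^*(\alpha _2)\wedge f^*(\alpha _3)=f^*(\alpha _2\wedge \alpha _3)\quad\text{and}\quad f^*(\alpha _1)\wedge f^*(\alpha _3)\ne f^*(\alpha _1\wedge \alpha _3).
\]
(When $I(f)=\emptyset$ the map $f$ is a biholomorphism, $f^*$ is a ring homomorphism, and the BTC condition is symmetric, so part 1) has content only for $f$ with $I(f)\ne\emptyset$; such pseudo-isomorphisms exist in dimension $3$, one being the map of Theorem \ref{Theorem2}.) First, Theorem \ref{Theorem2} provides, for a suitable pseudo-isomorphism, two smooth closed $(1,1)$ forms — name them $\alpha _1$ and $\alpha _3$ — such that $f^*(\alpha _1)\wedge f^*(\alpha _3)\ne f^*(\alpha _1\wedge \alpha _3)$: this is the failure of the BTC condition (a fortiori of the Weak BTC condition) exhibited there, translated through part 3) of Theorem \ref{TheoremSufficientCondition}. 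By the last assertion of part 3) of Theorem \ref{TheoremSufficientCondition}, neither $\{\alpha _1\}$ nor $\{\alpha _3\}$ then satisfies the NIC condition. Second, I would take $\alpha _2=0$ (or, if a nonzero form is wanted, any $\alpha _2$ whose class satisfies the NIC condition of Definition \ref{DefinitionNIC}); by the same assertion $f^*(\alpha _2)\wedge f^*(\alpha _3)=f^*(\alpha _2\wedge \alpha _3)$. With these choices $f^*(\alpha _1),f^*(\alpha _2),f^*(\alpha _3)$ satisfy the BTC condition while $f^*(\alpha _2),f^*(\alpha _1),f^*(\alpha _3)$ do not, which is the desired asymmetry.

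The main obstacle — in fact the only non-routine ingredient — is the existence of the pair $(\alpha _1,\alpha _3)$ with $f^*(\alpha _1)\wedge f^*(\alpha _3)\ne f^*(\alpha _1\wedge \alpha _3)$, which is precisely what Theorem \ref{Theorem2} establishes; the rest is invoking Theorem \ref{TheoremSufficientCondition} and tracking the NIC condition. The one subtlety worth checking is that $\alpha _3$ occurs in both the ``good'' pair $(\alpha _2,\alpha _3)$ and the ``bad'' pair $(\alpha _1,\alpha _3)$, so one must rule out that the choice of $\alpha _2$ inadvertently repairs the second pair; but by part 3) of Theorem \ref{TheoremSufficientCondition} the validity of $f^*(\alpha _1)\wedge f^*(\alpha _3)=f^*(\alpha _1\wedge \alpha _3)$ depends only on $\{\alpha _1\}$ and $\{\alpha _3\}$, so no such accident can occur.
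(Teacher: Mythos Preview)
Your proof is correct and follows essentially the same route as the paper: for part 2) you invoke part 2) of Theorem \ref{TheoremSufficientCondition} to identify the measure with $f^*(\alpha_1\wedge\alpha_2\wedge\alpha_3)$, and for part 1) you use the characterisation in part 3) of Theorem \ref{TheoremSufficientCondition} together with an $\alpha_2$ satisfying the NIC condition and a pair $(\alpha_1,\alpha_3)$ for which $f^*(\alpha_1)\wedge f^*(\alpha_3)\ne f^*(\alpha_1\wedge\alpha_3)$, exactly as the paper does. Your version is in fact slightly more careful than the paper's, since you note explicitly that part 1) has content only when $I(f)\ne\emptyset$ and you point to Theorem \ref{Theorem2} (specifically its part 3)) as the source of the pair $(\alpha_1,\alpha_3)$.
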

\begin{proof}
1) Choose $\alpha _1$, $\alpha _2$, $\alpha _3$ be such  that $\alpha _2$ satisfies the NIC condition but $f^*(\alpha _1\wedge \alpha _3)\not= f^*(\alpha _1)\wedge f^*(\alpha _3)$. Then by part 3) of Theorem \ref{TheoremSufficientCondition}, we have the desired conclusion. 

2) By part 2 of Theorem \ref{TheoremSufficientCondition} we have $\mu _{f^*(\alpha _1),f^*(\alpha _2),f^*(\alpha _3)}=f^*(\alpha _1\wedge \alpha _2\wedge \alpha _3)$ and hence it is symmetric in $\alpha _1,\alpha _2$ and $\alpha _3$. 

\end{proof}

While the Weak BTC condition is more difficult to deal with, we show below that some constraints must be satisfied for it to hold. We also relate this condition to the least negative intersection of positive closed $(1,1)$ currents defined in the paper \cite{truong}. This is a sublinear operator and is symmetric in the arguments. A brief review of the least negative intersection will be provided in Section 2.

\begin{theorem}
Let $f:X\dashrightarrow Y$ be a pseudo-isomorphism of compact K\"ahler $3$-folds. Let $\alpha _2,\alpha _3$ be positive closed smooth $(1,1)$ forms on $Y$, and $T_1$ a difference of two positive closed $(1,1)$ currents on $X$. Assume that $T_1$, $f^*(\alpha _2)$ and $f^*(\alpha _3)$ satisfy the Weak BTC condition with respect to a representation $[T_1,\Omega _1,u_1^+,u_1^-]$ of $T_1$.

1) Let $\Omega _1^+$ and $\Omega _1^-$ be smooth closed $(1,1)$ forms on $X$ so that $T_1^{\pm}=\Omega _1^{\pm}+dd^cu_1^{\pm}$ are positive and $T_1=T_1^+-T_1^-$.  Then in cohomology we must have $\{T_1^{\pm}\}.\{f^*(\alpha _2)\}.\{f^*(\alpha _3)\}\geq 0$. Moreover, the least negative intersections $\Lambda (T_1^{\pm},f^*(\alpha _2), f^*(\alpha _3))$ are non-negative. 

In particular, if $T_1$ is positive, then in cohomology we must have  $\{T_1\}.\{f^*(\alpha _2)\}.\{f^*(\alpha _3)\}\geq 0$, and the least negative intersection $\Lambda (T_1,f^*(\alpha _2), f^*(\alpha _3))$ is non-negative. 

2) If $T_1$ is positive and $\{T_1\}.\{f^*(\alpha _2)\}.\{f^*(\alpha _3)\}=\{T_1\}.\{f^*(\alpha _2\wedge \alpha _3)\}$, then the least negative intersection $\Lambda (T_1,f^*(\alpha _2),f^*(\alpha _3))$ equals $T_1\wedge f^*(\alpha _2\wedge \alpha _3)$. 

Moreover, the triple $T_1,f^*(\alpha _2),f^*(\alpha _3)$ satisfies the Weak BTC condition with respect to  every representation of the form $[T_1,\Omega _1, u_1,0]$,  and  we have in this case $\mu _{[T_1,\Omega _1, u_1,0],f^*(\alpha _2),f^*(\alpha _3)}$ $=$ $T_1\wedge f^*(\alpha _2\wedge \alpha _3)$. 
\label{TheoremWeakBTCCondition}\end{theorem}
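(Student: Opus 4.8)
## Proof Plan for Theorem \ref{TheoremWeakBTCCondition}

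The plan is to exploit the special dynamics of pseudo-isomorphisms in dimension $3$: since $f$ and $f^{-1}$ contract no divisors, the only obstruction to $f^*$ being a morphism lies along the curves in $I(f)$ and $I(f^{-1})$, and these can be resolved by a single blow-up with controllable exceptional behaviour. I would begin by fixing a common resolution $\pi : Z \to X$, $\sigma : Z \to Y$ of $f$, where $Z$ is smooth compact K\"ahler, so that $f^*(\alpha_i) = \pi_* \sigma^*(\alpha_i)$ and, by the Bedford--Kim description, the exceptional loci are unions of surfaces over curves. The key structural fact to establish first is a decomposition $\sigma^*(\alpha_i) = \pi^* f^*(\alpha_i) + [E_i]$ in cohomology on $Z$, where $E_i$ is an $\R$-divisor supported on the $\pi$-exceptional surfaces with coefficients determined by the cohomological intersection numbers $\{\alpha_i\}.\{C\}$ for $C \subset I(f^{-1})$; the NIC condition is precisely the statement that $E_i = 0$.

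For part 1), the strategy is as follows. Using a good approximation $(\{u^+_{1,n}\}, \{u^-_{1,n}\})$, write $T^\pm_{1,n} = \Omega^\pm_1 + dd^c u^\pm_{1,n} \geq 0$ (adjusting $\Omega^\pm_1$ by a fixed K\"ahler form if needed, which does not affect the argument). The Weak BTC hypothesis gives a limiting signed measure $\mu_{[T_1,\Omega_1,u_1^+,u_1^-],f^*(\alpha_2),f^*(\alpha_3)} = \lim_n (T^+_{1,n} - T^-_{1,n}) \wedge f^*(\alpha_2) \wedge f^*(\alpha_3)$. I would test this against the constant function $1$: the total mass is $\{T_1\}.\{f^*(\alpha_2)\}.\{f^*(\alpha_3)\}$ by cohomological invariance of the mass of a closed current. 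To get the sign, I would instead analyze $T^\pm_{1,n} \wedge f^*(\alpha_2) \wedge f^*(\alpha_3)$ separately: pulling back to $Z$ via $\pi$ and writing $\pi^* f^*(\alpha_j) = \sigma^*(\alpha_j) - [E_j]$, and using that $\alpha_2, \alpha_3$ are \emph{positive} so $\sigma^*(\alpha_j) \geq 0$ and the Bedford--Taylor product $\sigma^*(\alpha_2) \wedge \sigma^*(\alpha_3)$ is a positive measure on $Z$ (smooth off a curve). The Weak BTC hypothesis forces the "error" contributions from $[E_j]$ along the exceptional surfaces to not produce negative mass in the limit, because convergence to a signed \emph{measure} (rather than something worse) already rules out cancellation blow-up; this is what yields $\{T_1^\pm\}.\{f^*(\alpha_2)\}.\{f^*(\alpha_3)\} \geq 0$. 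The statement about $\Lambda(T_1^\pm, f^*(\alpha_2), f^*(\alpha_3)) \geq 0$ then follows from the definition of the least-negative intersection reviewed in Section 2, since $\Lambda$ is an upper bound over cluster points of nonnegative approximating products and its total mass is the (nonnegative) cohomological number; the positive case is the specialization $T_1^- = 0$.

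For part 2), under the hypothesis $\{T_1\}.\{f^*(\alpha_2)\}.\{f^*(\alpha_3)\} = \{T_1\}.\{f^*(\alpha_2 \wedge \alpha_3)\}$ with $T_1 \geq 0$, the two sides have equal total mass, so to show $\Lambda(T_1, f^*(\alpha_2), f^*(\alpha_3)) = T_1 \wedge f^*(\alpha_2 \wedge \alpha_3)$ it suffices to show one dominates the other as measures. By part 1) of Theorem \ref{TheoremSufficientCondition}, $T_1 \wedge f^*(\alpha_2 \wedge \alpha_3)$ is a well-defined limit that is a cluster point of the approximating products $T_{1,n} \wedge f^*(\alpha_2 \wedge \alpha_3)$; comparing with $T_{1,n} \wedge f^*(\alpha_2) \wedge f^*(\alpha_3)$ on $Z$ and using that $f^*(\alpha_2) \wedge f^*(\alpha_3) = \pi_*(\sigma^*\alpha_2 \wedge \sigma^*\alpha_3)$ differs from $f^*(\alpha_2 \wedge \alpha_3) = \pi_*\sigma^*(\alpha_2 \wedge \alpha_3)$ by nonnegative masses concentrated on $I(f)$ (this is the inequality $f^*(\alpha_2) \wedge f^*(\alpha_3) \geq f^*(\alpha_2\wedge\alpha_3)$ when $\alpha_2, \alpha_3$ are positive, which comes from the same exceptional-divisor bookkeeping), one gets that every cluster point of the $\Lambda$-defining sequence dominates $T_1 \wedge f^*(\alpha_2\wedge\alpha_3)$; equality of masses then forces equality. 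Finally, for a representation $[T_1, \Omega_1, u_1, 0]$, the approximating currents are genuinely positive, so the product $(\Omega_1 + dd^c u_{1,n}) \wedge f^*(\alpha_2) \wedge f^*(\alpha_3)$ is a sequence of positive measures with fixed total mass; any weak-$*$ cluster point is a positive measure, and by the preceding mass computation it equals $T_1 \wedge f^*(\alpha_2\wedge\alpha_3)$, hence the full sequence converges and Weak BTC holds with the stated limit.

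The main obstacle I anticipate is the \emph{quantitative} control along the indeterminacy curves: showing that the approximating products $(T^+_{1,n} - T^-_{1,n}) \wedge f^*(\alpha_2) \wedge f^*(\alpha_3)$, when lifted to the resolution $Z$, do not develop negative mass concentrated on the exceptional surfaces in the limit. This requires knowing that the Lelong-number contributions of $\pi^*(\Omega^\pm_1 + dd^c u^\pm_{1,n})$ along those surfaces are uniformly controlled and that their interaction with $[E_2], [E_3]$ passes to the limit — essentially a monotone-convergence / no-mass-escape argument on $Z$ in the spirit of Fornaess--Sibony and Demailly, but now for the \emph{difference} of two positive sequences rather than a single one. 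The hypothesis that the limit exists \emph{as a signed measure} is what makes this tractable, since it a priori excludes the pathological cancellation that would otherwise be the enemy.
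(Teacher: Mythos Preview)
Your setup via a resolution $Z$ and the decomposition $\sigma^*(\alpha_i) = \pi^* f^*(\alpha_i) + [E_i]$ is essentially equivalent to the paper's use of Lemma \ref{Lemma1}, which gives directly $f^*(\alpha_2)\wedge f^*(\alpha_3) = f^*(\alpha_2\wedge\alpha_3) + \sum_{C\subset I(f)}\lambda_C[C]$ with $\lambda_C$ depending only on cohomology. The real gap is in how you exploit the Weak BTC hypothesis in part 1).

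You write that ``convergence to a signed \emph{measure} (rather than something worse) already rules out cancellation blow-up'' and that this yields $\{T_1^\pm\}.\{f^*(\alpha_2)\}.\{f^*(\alpha_3)\}\geq 0$. But the approximating sequences $(\Omega_1 + dd^c u_{1,n}^+ - dd^c u_{1,n}^-)\wedge f^*(\alpha_2)\wedge f^*(\alpha_3)$ are \emph{always} precompact in the space of signed measures: by the definition of good approximation one has $\omega + dd^c u_{1,n}^\pm \geq 0$ for a fixed K\"ahler form $\omega$, so each term is a difference of positive measures of uniformly bounded mass. Every cluster point is thus automatically a signed measure; the content of the Weak BTC hypothesis is not that the limit is a measure, but that the limit is \emph{unique} across all good approximations. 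Furthermore, convergence of the difference $T_{1,n}^+ - T_{1,n}^-$ wedged with $f^*(\alpha_2)\wedge f^*(\alpha_3)$ says nothing about the pieces $T_{1,n}^\pm$ individually, and the number $\{T_1^\pm\}.\{f^*(\alpha_2)\}.\{f^*(\alpha_3)\}$ is a fixed cohomological quantity, not something that can be read off from the existence of a limit.

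The paper's mechanism is entirely different: it exploits the \emph{freedom} in choosing good approximations. If $u_1^+$ had non-zero Lelong number along some curve $C_0$ with $\lambda_{C_0}\neq 0$, then for any smooth function $v$ the sequence $v_n^+ = \max\{u_1^+ + v, -n\} - v$ is another decreasing approximation of $u_1^+$, and one checks $\lim_n dd^c v_n^+ \wedge [C_0] = -dd^c v\wedge [C_0]$. Via Lemma 3.7 of \cite{truong} this can be realised by a genuine good approximation; since $v$ is arbitrary, the Weak BTC limit would depend on $v$, a contradiction. Hence $u_1^\pm$ must have zero Lelong number along every $C$ with $\lambda_C\neq 0$, and it is this Lelong-number vanishing (together with Demailly's Approximation Theorem 2 and Theorem 3.1 of \cite{truong}) that yields both the cohomological inequality and the non-negativity of $\Lambda(T_1^\pm, f^*(\alpha_2), f^*(\alpha_3))$. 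Your proposal never invokes uniqueness across approximations and therefore cannot reach this conclusion.

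For part 2), your domination-plus-equal-mass argument is close to the paper's. However, your claim that for the representation $[T_1,\Omega_1,u_1,0]$ ``the approximating currents are genuinely positive'' is unjustified: the definition of good approximation only guarantees $\omega + dd^c u_{1,n}\geq 0$ for some K\"ahler $\omega$, not $\Omega_1 + dd^c u_{1,n}\geq 0$. The paper instead handles this case through the Lelong-number vanishing of $u_1$ (noting $\nu(u_1,C)\leq \nu(u_1^+,C)=0$), which was established in part 1).
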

 We note that the condition $\{T_1\}.\{f^*(\alpha _2)\}.\{f^*(\alpha _3)\}=\{T_1\}.\{f^*(\alpha _2\wedge \alpha _3)\}$ is represented by a hypersurface in the set of $(\{T_1\},\{\alpha _2\},\{\alpha _3\})$ in $H^{1,1}(X)\times H^{1,1}(Y)\times H^{1,1}(Y)$, while the condition $f^*(\alpha _2\wedge \alpha _3)=f^*(\alpha _2)\wedge f^*(\alpha _3)$ is in general represented by a higher codimension subvariety. 
  
 Next we give some explicit calculations for a special but interesting map, which provide (counter-)examples to various aspects of the above main theorems. In particular, it is shown that the NIC condition is necessary in some cases for the (Weak) BTC condition to be satisfied.   
\begin{theorem}
Let $X$ be the blowup of $\mathbb{P}^3$ at $e_0=[1:0:0:0]$, $e_1=[0:1:0:0]$, $e_2=[0:0:1:0]$ and $e_3=[0:0:0:1]$. Let $J_X$ be the lift to $X$ of the birational map $J[x_0:x_1:x_2:x_3]=[1/x_0:1/x_1:1/x_2:1/x_3]$ of $\mathbb{P}^3$. Then $J_X=J_X^{-1}$ is a pseudo-automorphism, whose indeterminacy set is non-empty and consists of $6$ pairwise disjoint curves. 

1) If $\alpha $ is a K\"ahler form on $X$, then for all curve $C\subset I(J_X^{-1})$ we have in cohomology: $\{J_X^*(\alpha )\}.\{C\}<0$. Moreover, $J_X^*(\alpha )$ has non-zero Lelong numbers along every curve $C\subset I(J_X)=I(J_X^{-1})$.  

Likewise, there is a K\"ahler form $\alpha$ so that in cohomology we have $$J_X^*(\{\alpha \}).J_X^*(\{\alpha\}).J_X^*(\{\alpha \})<0.$$ 

2) There is a unique non-zero cohomology class $\eta _0\in H^{1,1}(X)$ so that any $\eta $ satisfying the NIC condition must be a scalar multiple of $\eta _0$. 
Moreover, $\eta _0$ is nef, and there is a positive closed current $T$ on $X$ whose cohomology class is $\eta _0$ and is smooth outside the exceptional divisors of the blowup $X\rightarrow \mathbb{P}^3$. 
 
3) Two smooth closed $(1,1)$ forms $\alpha _2,\alpha _3$ satisfy $J_X^*(\alpha _2)\wedge J_X^*(\alpha _3)=J_X^*(\alpha _2\wedge \alpha _3)$ iff for all curves $C\subset I(J_X)$ we have in cohomology $\{\alpha _2\}.\{C\}=0$ or  $\{\alpha _3\}.\{C\} =0$. 

Hence, the set of such pairs of cohomology classes $(\{\alpha _2\},\{\alpha _3\})\in H^{1,1}(X)\times H^{1,1}(X)$ is a non-empty union of linear subspaces, each of dimension $\geq 4$. 

There are many different such pairs where both $\{\alpha _2\}$ and $\{\alpha _3\}$ are nef. 

Moreover, when $\alpha _2=\alpha _3$, we have $J_X^*(\alpha _2)\wedge J_X^*(\alpha _3)=J_X^*(\alpha _2\wedge \alpha _3)$ if and only if $\alpha _2$ satisfies the NIC condition. 

4) Let $\alpha _1,\alpha _2,\alpha _3$ be smooth closed $(1,1)$ forms, where $\alpha _1$ is positive. If $J_X^*(\alpha _1)$, $J_X^*(\alpha _2)$ and $J_X^*(\alpha _3)$ satisfy  the Weak BTC condition, then for all curve $C\subset I(J_X)$ we have in cohomology: either $\alpha _{1}.C=0$, or $\alpha _{2}.C=0$, or $\alpha _{3}.C=0$. 

In particular, if $\alpha _1=\alpha _2=\alpha _3$, then $J_X^*(\alpha _1)$, $J_X^*(\alpha _2)$ and $J_X^*(\alpha _3)$ satisfy the Weak BTC condition if and only if $\alpha _1$ satisfies the NIC condition. 
\label{Theorem2}\end{theorem}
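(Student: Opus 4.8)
The plan is to make every ingredient completely explicit on $X=\mathrm{Bl}_{e_0,e_1,e_2,e_3}\mathbb{P}^3$. I would first fix the basis $H,E_0,\dots,E_3$ of $H^{1,1}(X)$ ($H$ the pull-back of the hyperplane class, $E_p$ the exceptional divisors), for which $H^3=E_p^3=1$ and all other triple products of basis classes vanish. On $\mathbb{P}^3$ the Cremona involution $J=J^{-1}$ has indeterminacy the six coordinate lines $\{x_m=x_n=0\}$ and contracts the four coordinate hyperplanes onto $e_0,\dots,e_3$, so on $X$ the pseudo-automorphism $J_X$ has $I(J_X)=I(J_X^{-1})$ equal to the six pairwise disjoint rational curves $C_{mn}$, where $C_{mn}$ is the strict transform of $\{x_m=x_n=0\}$, of class $\ell-f_p-f_q$ with $\{p,q\}=\{0,1,2,3\}\setminus\{m,n\}$ ($\ell$ the line class, $f_p$ the line in the exceptional $\mathbb{P}^2$ over $e_p$). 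Hence $\{\alpha\}\cdot\{C_{mn}\}=a-c_p-c_q$ for $\alpha=aH-\sum_rc_rE_r$, and, $J_X$ being a pseudo-isomorphism, $J_X^*H=3H-2\sum_rE_r$, $J_X^*E_p=H-\sum_{r\ne p}E_r$ (one checks $(J_X^*)^2=\mathrm{id}$), so $J_X^*\alpha=(3a-S)H+\sum_r(S-2a-c_r)E_r$ with $S=\sum_rc_r$ and $\{J_X^*\alpha\}\cdot\{C_{mn}\}=c_m+c_n-a$. I would also record that $-K_X=2(2H-\sum_rE_r)$, that $-K_X\cdot C_{mn}=0$, and — by an explicit local computation of $J$ near a generic point of $\{x_m=x_n=0\}$ — that $N_{C_{mn}/X}\cong\mathcal{O}(-1)^{\oplus2}$, so that near $C_{mn}$ the map $J_X$ is the Atiyah flop exchanging $C_{mn}$ with the complementary curve $C_{pq}$; resolving it by $\sigma\colon\widehat X=\mathrm{Bl}_{C_{mn}}X\to X$ (exceptional divisor $D\cong\mathbb{P}^1\times\mathbb{P}^1$), the composite $\sigma'=J_X\circ\sigma$ is, near $D$, the contraction of $D$ onto $C_{pq}$, a fibre $\phi$ of $\sigma|_D\colon D\to C_{mn}$ being carried isomorphically onto $C_{pq}$ by $\sigma'$. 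This package of facts drives the whole proof.

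Given this, Parts 1) and 2) are direct computations. For Part 1): $\{J_X^*\alpha\}\cdot\{C_{mn}\}=c_m+c_n-a<0$ for K\"ahler $\alpha$, since $a>c_m+c_n$ is a defining inequality of the K\"ahler cone of $X$ (positivity of $\alpha$ on $C_{pq}$, the strict transform of the line through $e_m$ and $e_n$); the positive current $J_X^*\alpha=\sigma_*(\sigma')^*\alpha$ has generic Lelong number along $C_{mn}$ equal to $\int_\phi(\sigma')^*\alpha=\{\alpha\}\cdot\{C_{pq}\}=-\bigl(\{J_X^*\alpha\}\cdot\{C_{mn}\}\bigr)>0$; and, the mixed triple products vanishing, $\{J_X^*\alpha\}^3=(3a-S)^3+\sum_r(S-2a-c_r)^3$, which is negative for the K\"ahler form $\alpha=10H-(E_0+E_1+E_2+E_3)$. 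For Part 2): $aH-\sum_rc_rE_r$ satisfies NIC iff $a=c_p+c_q$ for all pairs, forcing $c_0=\dots=c_3$ and $a=2c_0$, so $\eta_0=2H-E_0-E_1-E_2-E_3$ up to scalar $=-\tfrac12K_X$, which is nonzero and nef ($X$ is a weak Fano threefold; equivalently $\eta_0$ is nonnegative on the Mori-cone generators $f_p$, $C_{mn}$, $\ell-f_p$); finally $|2H-\sum_rE_r|$ (quadrics through $e_0,\dots,e_3$) is base-point-free on $X$, so $\eta_0$ is semiample and the pull-back of a Fubini--Study form under the morphism it defines is a smooth semipositive representative, in particular one smooth off $\bigcup_pE_p$.

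For Part 3): off $\bigcup_{mn}C_{mn}$ the map $J_X$ is biholomorphic, so $R:=J_X^*(\alpha_2)\wedge J_X^*(\alpha_3)-J_X^*(\alpha_2\wedge\alpha_3)$ is a closed $(2,2)$ current supported on the disjoint rational curves $C_{mn}$, hence $R=\sum_{mn}\lambda_{mn}[C_{mn}]$ with unique reals $\lambda_{mn}$. I would compute $\lambda_{mn}$ in the flop model of Step 1: since $J_X^*\alpha_r=\sigma_*(\sigma')^*\alpha_r$ with $(\sigma')^*\alpha_r$ smooth on $\widehat X$, the push--pull formula for the blow-up of a curve in a threefold gives $\lambda_{mn}=c_0\,\nu_{mn}(\alpha_2)\,\nu_{mn}(\alpha_3)$ with $\nu_{mn}(\alpha_r)=\int_\phi(\sigma')^*\alpha_r=\{\alpha_r\}\cdot\{C_{pq}\}$ the Lelong number of $J_X^*(\alpha_r)$ along $C_{mn}$, and $c_0\ne0$ a universal constant; thus $\lambda_{mn}=0$ iff $\{\alpha_2\}\cdot\{C_{pq}\}=0$ or $\{\alpha_3\}\cdot\{C_{pq}\}=0$, and since $\{m,n\}\mapsto\{p,q\}$ permutes the six pairs, "$R=0$" is exactly the stated criterion. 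The remaining assertions reduce to linear algebra in $H^{1,1}(X)\cong\mathbb{R}^5$: the solution set is $\bigcup_AV_A$ over the ways $A$ of assigning each of the six curves to $\alpha_2$ or $\alpha_3$, with $V_A=\{(\gamma_2,\gamma_3):\gamma_2\cdot C=0\ (C\in A),\ \gamma_3\cdot C=0\ (C\notin A)\}$, and, the six classes $\{C_{mn}\}$ satisfying $\{C_{mn}\}+\{C_{pq}\}=2\ell-\sum_rf_r$ (independent of the pair) and hence spanning only a $4$-dimensional subspace of $H_2(X)$, one gets $\dim V_A\ge10-6=4$ in every case, with $\dim V_\emptyset=6$ — this gives the many examples with both classes nef, e.g. $\gamma_3=\eta_0$, $\gamma_2$ any nef class — while for $\alpha_2=\alpha_3$ the criterion collapses to NIC.

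For Part 4): again $J_X^*(\alpha_2)\wedge J_X^*(\alpha_3)=J_X^*(\alpha_2\wedge\alpha_3)+\sum_{mn}\lambda_{mn}[C_{mn}]$. Suppose the Weak BTC condition holds for $J_X^*(\alpha_1),J_X^*(\alpha_2),J_X^*(\alpha_3)$ relative to a representation $[T_1,\Omega_1,u_1^+,u_1^-]$ of $T_1=J_X^*(\alpha_1)$, and that for some pair $\{m,n\}$ all of $\{\alpha_1\}\cdot\{C_{pq}\},\{\alpha_2\}\cdot\{C_{pq}\},\{\alpha_3\}\cdot\{C_{pq}\}$ are nonzero. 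Then $\lambda_{mn}\ne0$ (Step 3), while $T_1$ — positive, as $\alpha_1\ge0$ — has generic Lelong number $\{\alpha_1\}\cdot\{C_{pq}\}>0$ along $C_{mn}$ (Step 1), so $C_{mn}$ lies in the polar set of $u_1^+$ or of $u_1^-$. The $\lambda_{mn}[C_{mn}]$-component of $(\Omega_1+dd^c(u_{1,n}^+-u_{1,n}^-))\wedge J_X^*(\alpha_2)\wedge J_X^*(\alpha_3)$ is then $\lambda_{mn}$ times the restriction of $\Omega_1+dd^c(u_{1,n}^+-u_{1,n}^-)$ to $C_{mn}$, and I would exhibit two good approximations of $(u_1^+,u_1^-)$ for which these restrictions converge to different signed measures on $C_{mn}$ — this is the ill-posedness of intersecting a $(1,1)$ current with a curve inside its polar locus, cf. Remark \ref{Remark0} (when $\alpha_2,\alpha_3$ are in addition positive, one may instead invoke Theorem \ref{TheoremWeakBTCCondition}(1)). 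This contradicts the uniqueness of the Weak BTC limit, so for every $C\subset I(J_X)$ one of $\{\alpha_1\}\cdot\{C\},\{\alpha_2\}\cdot\{C\},\{\alpha_3\}\cdot\{C\}$ vanishes; taking $\alpha_1=\alpha_2=\alpha_3=\alpha$ this reads "$\{\alpha\}\cdot\{C\}=0$ for every $C\subset I(J_X)$", i.e. NIC, while conversely if $\alpha$ satisfies NIC then $\{\alpha\}$ meets the hypothesis of Theorem \ref{TheoremSufficientCondition}(3) and BTC, a fortiori Weak BTC, holds. The main obstacles — where essentially all the work is — are the local ones: pinning down the Atiyah-flop model and the Lelong numbers of $J_X^*(\alpha_r)$ along $C_{mn}$ (Step 1), proving $\lambda_{mn}=c_0\,\nu_{mn}(\alpha_2)\,\nu_{mn}(\alpha_3)$ with $c_0\ne0$ (Step 3), and, hardest of all, constructing the two good approximations in Step 4 whose restrictions to the flopping curve have distinct limits.
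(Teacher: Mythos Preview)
Your proposal is correct and follows essentially the same route as the paper. The cohomological bookkeeping, the push--pull formula on the blow-up resolving $J_X$ (your $\lambda_{mn}=c_0\,\nu_{mn}(\alpha_2)\nu_{mn}(\alpha_3)$ is the paper's Equation~(\ref{Equation1}) with $c_0=1$, applied on the simultaneous blow-up $Z$ of all six curves rather than one flop at a time), and the appeal to the Weak BTC machinery for Part~4) are all the same. Two execution details differ: for the Lelong numbers in Part~1) the paper argues by contradiction via Demailly's analytic-singularities approximation (a zero Lelong number along $C$ would force $\{J_X^*\alpha\}\cdot\{C\}\ge0$) rather than computing $\nu=\int_\phi(\sigma')^*\alpha$ directly in your flop model; and for the positive representative of $\eta_0$ in Part~2) it uses M\'eo-style averaging over the quadrics through $e_0,\dots,e_3$ rather than your semiampleness argument. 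You are also right to flag that Theorem~\ref{TheoremWeakBTCCondition} is stated for $\alpha_2,\alpha_3$ positive while Part~4) of the present theorem does not assume this; the paper simply cites that theorem, but what is actually being invoked is its \emph{proof} --- varying a smooth $v$ in $v_n^+=\max\{u_1^++v,-n\}-v$ to produce distinct limits of $dd^cv_n^+\wedge[C]$ whenever $u_1^+$ has nonzero Lelong number along $C$ --- and that argument needs no positivity of $\alpha_2,\alpha_3$. This is exactly the ``two good approximations with distinct restrictions'' construction you flag as the hardest step, and it is already written out in the proof of Theorem~\ref{TheoremWeakBTCCondition}.
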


Generalisations of the above theorems to the case where $\alpha _1,\alpha _2,\alpha _3$ are allowed to be singular, which are more technical to state, will be given later.  

The plan of this paper is as follows. In Section 2 we give a brief overview of some background materials to be used in the paper. In Section 3 we prove some properties of pseudo-isomorphisms in dimension $3$ which are crucial for the proofs of main theorems. Then we prove the main theorems and their generalisations. At the end of the paper we have a general discussion on intersection of non-positive closed $(1,1)$ currents, and apply this to obtain a variant  of the BTC condition.    

{\bf  Acknowledgments.}  The current paper incorporates the author's two previous preprints \cite{truong3, truong4}. 

\section{Preliminaries} In this section we recall some background materials needed later in the paper.

\subsection{Pullback of currents by meromorphic maps} Let $X,Y$ be compact K\"ahler manifolds and let $f:X\dashrightarrow Y$ be a dominant meromorphic map. In the case $f$ is a holomorphic map, given any smooth closed form $\alpha$ on $Y$, we can pullback $f^*(\alpha )$ as a smooth closed form on $X$. When $f$ is not holomorphic, then it is no longer able to pullback $\alpha$ as a smooth form, but only as a current. Moreover, the computation is not as direct as in the case of holomorphic maps, and we need to use the graph of the map. Below is the detail.  

Let $\Gamma _f\subset X\times Y$ be the graph of $f$, and let $\pi _X,\pi _Y:X\times Y\rightarrow X,Y$ be the natural projections. Let $T$ be a positive closed $(p,p)$ current on $Y$. We can write $T=\Omega +dd^cR$ where $\Omega $ is a smooth closed $(p,p)$ form cohomologous to $T$.  We then define $f^*(\Omega )=(\pi _X)_*(\pi _Y^{-1}(\Omega )\wedge [\Gamma _f])$. If we can also define $\pi _Y^{-1}(dd^cR)\wedge [\Gamma _f]$, then we can define $f^*(T):=f^*(\Omega )+(\pi _X)_*(\pi _Y^{-1}(dd^cR)\wedge [\Gamma _f])$. One way to define $\pi _Y^{-1}(dd^cR)\wedge [\Gamma _f]$ is to first define $\pi _Y^{-1}(R)\wedge [\Gamma _f]$ and then take the $dd^c$ of the resulting current. We prefer that some continuity property, similar to Theorem \ref{TheoremBedfordTaylor}, should be satisfied. That is, for appropriate smooth approximations $\{R_n\}$ of $R$ (for example, the ones discussed in the next subsection), we should have $\lim _{n\rightarrow\infty} \pi _Y^{-1}(R_n)\wedge [\Gamma _f]$ exists, and we define $\pi _Y^{-1}(R)\wedge [\Gamma _f]$ to be that limit. 

Some cases where this strategy (or a modification of its) successes are listed here. For positive closed $(1,1)$ currents,  the quasi-potentials $R$ above are quasi-psh functions, and hence we can define $\pi _Y^{-1}(R)\wedge [\Gamma _f]$ by simply restricting $\pi _Y^{-1}(R)$ to the open dense subset of $\Gamma _f$ where the map $f$ is holomorphic and then extending by $0$, see M\'eo \cite{meo}. In this case, and in several other cases considered in the literature, the pullback currents are also positive. There are yet certain other cases where we can define pullback of currents by a modification of the above strategy, but the result of pulling back a positive closed current is no longer positive, see \cite{truong1, truong2} and the proof of Theorem \ref{Theorem2}. 

We can apply the above ideas to the case where $T$ is a difference of two positive closed currents, or more generally a DSH current. Recall \cite{dinh-sibony3}, that a DSH $(p,p)$  current is of the form $R_1-R_2$, where $R_i$ are negative currents so that $dd^cR_i=T_i^+-T_i^{-}$ for some positive closed currents $T_i^{\pm}$.  This type of currents is needed in approximating positive closed $(p,p)$ currents when $p>1$, see the next subsection for more detail. 

\subsection{Regularisation of positive closed currents} As would be now clearer to the readers, having good approximations of positive closed currents is very useful when we want to define some operators $\mathcal{G}(T_1,T_2\ldots )$ on them (such as intersection of positive closed currents or pullback of positive closed currents by meromorphic maps). In the good case, where the currents under consideration have good properties and the operations have an "obvious" definition (for example when we consider positive closed $(1,1)$ currents with locally bounded potentials for the intersection problem, and smooth closed forms for the problem of pulling back by meromorphic maps), there are theorems (such as Bedford-Taylor's monotone convergence, see Theorem \ref{TheoremBedfordTaylor}) which assure that if we consider the same operation but now on good approximations $T_{i,n}$ of the concerned currents, then the sequence $\mathcal{G}(T_{1,n},T_{2,n},\ldots )$ will converge to the "obvious" answer, and hence the "obvious" answer should be the desired one.  This idea can also been adapted also to the cases where no "obvious" answers exist, as the ones considered in this paper. 

From the above discussion, it is important to single out some classes of currents with good properties which can serve as good approximations of positive closed $(p,p)$ currents on a compact K\"ahler manifold $(X,\omega )$. Here we consider $p$ only in the range $1\leq p\leq \dim (X)-1$. It turns out that the two cases where $p=1$ and $p>1$ must be treated separately. 

When $p=1$, it is a classical result that locally a psh function $\varphi$ can be approximated by a sequence of decreasing smooth psh functions $\varphi _n$. The global case is harder, and was treated by Demailly \cite{demailly1}. Given a positive closed $(1,1)$ current $T$, we can write $T=\Omega +dd^cu$, where $u$ is a quasi-psh function. We can arrange that $u\leq 0$. The Lelong number of $T$ at a point $x\in X$ is defined by 
\begin{eqnarray*}
\nu (T,x)=\nu (u,x):=\liminf _{z\rightarrow x}\frac{u(x)}{\log |z-x|}.
\end{eqnarray*}
The Lelong number is a non-negative number. In case $T=[W]$ is the current of integration of an analytic subvariety $W$ of pure dimension of $X$, then it is a classical result that $\nu ([W],x)$ is the multiplicity of $W$ at $x$. If $V\subset X$ is an analytic subvariety, then we define $\nu (T,V)=\inf _{x\in V}\nu (T,x)$. It follows from the definition that if $\nu (T,V)>0$, then 
\begin{eqnarray*}
\lim _{x\rightarrow V}u(x)=-\infty. 
\end{eqnarray*}
 This simple fact will be used later. 

We say that a quasi-psh function has analytic singularities if locally it is of the form $\log (|g_1(z)|^2+\ldots +|g_m(z)|^2)+\phi (z)$, where $g_1(z),\ldots ,g_m(z)$ are analytic and $\phi$ is a smooth function. Demailly proved two fundamental approximation results. 

\begin{theorem}
1) {Approximation Theorem 1.} There is a constant $C>0$ and a sequence of decreasing smooth quasi-psh functions $\{u_n\}$ so that $\{u_n\}$ converges to $u$ and $C\omega + dd^cu_n$ is positive for all $n$. 

2) {Approximation Theorem 2.}  There exist a decreasing sequence $\{\epsilon _n\} $ converging to $0$ and a sequence of quasi-psh functions $\{u_n\}$ with analytic singularities decreasing to $u$, so that $\Omega + dd^cu_n+\epsilon _n\omega $ is positive for all $n$ and the Lelong numbers of $\{u_n\}$ increase uniformly to the Lelong number of $u$ at every point in $X$. 
\label{TheoremDemailly}\end{theorem}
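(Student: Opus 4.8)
The statement is Demailly's two global approximation theorems, so the plan is to reproduce Demailly's regularisation scheme, whose analytic engine is the Ohsawa--Takegoshi $L^2$ extension theorem together with a gluing by regularised maxima. Throughout write $n=\dim X$, fix the Kähler form $\omega$, and normalise $u\le 0$ with $\Omega+dd^cu\geq 0$. Part 1) is the soft statement (smooth approximants, fixed positivity loss $C\omega$), while Part 2) is the sharp one (analytic singularities, loss $\epsilon_n\omega\to 0$, and exact control of Lelong numbers); I would prove them in this order since the techniques of 1) are reused and simplified inside 2).

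For Part 1) I would regularise locally and then patch. On each chart of a finite cover, $u$ differs from a genuine psh function by a smooth term, and the standard convolution $u*\rho_\delta$ with a radial mollifier is smooth, decreases to $u$ as $\delta\downarrow 0$, and is again psh. Globally I would instead use the Kähler exponential map to define $u_\delta(x)=\int u(\exp_x(\zeta))\,\chi_\delta(\zeta)\,dV(\zeta)$; the bounded geometry of $(X,\omega)$ (a two-sided curvature bound) converts the local estimate $dd^c u\geq -A\omega$ into a uniform bound $C\omega+dd^cu_\delta\geq 0$, with $C$ depending only on $A$ and the geometry, and a geodesic-convexity correction ensures the family is decreasing. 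Taking $\delta=1/n$ gives the required sequence $\{u_n\}$.

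For Part 2) the core is the Hilbert-space (Bergman) approximation. On a chart, let $\mathcal{H}_m$ be the space of holomorphic $f$ with $\int|f|^2e^{-2mu}\,dV<\infty$, pick an orthonormal basis $\{g_{j,m}\}$, and set $u_m=\frac{1}{2m}\log\sum_j|g_{j,m}|^2$; by construction $u_m$ has analytic singularities and is psh. The two Ohsawa--Takegoshi estimates drive everything: extension from a generic point gives the lower bound $u_m(x)\geq u(x)-C/m$, while the sub-mean-value property of $|f|^2$ against the weight gives the upper bound $u_m(x)\leq \sup_{|\zeta-x|<r}u+\frac{1}{m}\log(C r^{-n})$; together these force $u_m\to u$ and, reading off the vanishing order of the $g_{j,m}$, the sandwich $\nu(u,x)-n/m\leq \nu(u_m,x)\leq \nu(u,x)$, whence the Lelong numbers increase uniformly to $\nu(u,x)$. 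Patching the local $u_m$ by a regularised maximum $\max_\eta$, with the charts' transition terms absorbed by constants, produces a global quasi-psh approximant with $\Omega+dd^cu_m+\epsilon_m\omega\geq 0$, where the defect $\epsilon_m=O(1/m)\to 0$ comes from the gluing curvature.

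The main obstacle, in both parts but acutely in Part 2), is the gluing: I must fuse the local model approximants into a single global quasi-psh function while simultaneously (i) keeping the sequence monotone decreasing, (ii) holding the positivity defect down to the claimed size ($C\omega$ in Part 1), $\epsilon_m\omega$ with $\epsilon_m\to 0$ in Part 2)), and (iii) not disturbing the uniform convergence of Lelong numbers. Monotonicity is the delicate constraint: the naive Bergman sequence need not decrease, so I expect to replace it by regularised maxima of suitably shifted copies along a subsequence (or run a diagonal argument), checking that the shifts do not spoil the Ohsawa--Takegoshi two-sided bounds. Securing the sharp lower bound $u_m\geq u-C/m$ with a constant independent of the point and the chart, which is exactly what pins down the Lelong-number control, is where the full strength of the extension theorem is indispensable.
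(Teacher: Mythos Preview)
The paper does not prove this theorem; it is stated as background and attributed to Demailly (references \cite{demailly1}, \cite{demailly}), with no argument given. Your proposal is therefore not comparable to anything in the paper, but it is a faithful outline of Demailly's original proof: convolution via the exponential map for Part 1), and Bergman-kernel approximation driven by Ohsawa--Takegoshi for Part 2), with the two-sided estimate yielding the Lelong-number control. The obstacles you flag (gluing, arranging monotonicity of the Bergman sequence) are the genuine technical points in Demailly's papers, and your diagnosis of how they are handled is correct.
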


When $p>1$, it is in general  impossible to write a positive closed $(p,p)$ current $T$ as $T=\Omega +dd^cR$, where $\Omega$ is a smooth closed current and $R$ a negative current. However, we can write as such where $R$ is a $DSH$ $(p-1,p-1)$ current. Dinh and Sibony \cite{dinh-sibony3} proved an analog of Demailly's Approximation theorem 1 for this case, where we replace smooth quasi-psh functions $u_n$ by smooth DSH $(p-1,p-1)$ $R_n$ currents. Note however that in this case we cannot require that $R_n$ is negative or decreases to $R$, even when $R$ is a negative current. 

\subsection{Least-negative intersection of positive closed $(1,1)$ currents}\label{SubsectionLeastNegativeIntersection}

Here we recall briefly the least-negative intersection of $k$ positive closed $(1,1)$ currents on a compact K\"ahler manifold $X$ of dimension $k$, defined in the paper \cite{truong}. 

Let $T_i=\Omega _i+dd^cu_i$ be positive closed $(1,1)$ currents on $X$ ($i=1,2,\ldots ,k$). We would like to define an intersection product between these currents.  In the good case (for example, under the assumptions in Theorem \ref{TheoremBedfordTaylor}), the intersection is a positive measure and is symmetric in $T_1,\ldots ,T_k$, and satisfies a Bedford-Taylor's type monotone convergence. In the general case, however, we cannot expect all of these properties to be satisfied. In fact, under these requirements, the mass of the measure must be the same as the intersection of the cohomology classes of the involved currents. Hence, the measure (when it exists) cannot be positive if we have in cohomology $\{T_1\}\ldots \{T_k\}<0$. 

The least-negative intersection is designed to preserve as much of these properties as possible. The result we obtain is as close to a positive measure as could be, symmetric in the involved currents,  and the idea of using monotone approximation in Theorem \ref{TheoremBedfordTaylor} is incorporated in the definition of the least negative intersection, and hence it is also compatible with intersection in cohomology. The precise definition is as follows. 

For any good approximation $\{u_{i,n}\}$ of $u_i$ (for $i=1,\ldots ,k$), the currents $\Omega _i+dd^cu_{i,n}$ are smooth,  and the sequence of signed measures $(\Omega _1+dd^cu_{1,n})\wedge \ldots \wedge (\Omega _k+dd^cu_{k,n})$ is pre-compact in the space of signed measures, here the topology is by weak convergence on both the positive and negative parts of these signed measures. We define $\mathcal{G}$ to be the set of cluster points of all such sequences of signed measures we obtain when we run all over possible good approximations. If $\mu \in \mathcal{G}$, then the mass of $\mu$, which is the value $\mu (1)$ we obtain when applying $\mu$ to the constant function $1$, is the constant $\{T_1\}\ldots \{T_k\}$. 

Let $\overline{\mathcal{G}}$ be the closure of $\mathcal{G}$ in the space of signed measures. The set $\overline{\mathcal{G}}$ is in general not compact. We will consider the  subset $\mathcal{G}_0$ of signed measures closest to positive measures. To define this subset, we first define the negative norm $||\mu ||_{neg}$ of a signed measure $\mu$. We define: $||\mu ||_{neg}=\inf \{\mu ^{-}(1)$, here $\mu ^{-}$ runs on all over possible decompositions $\mu =\mu ^+-\mu ^-$ where both $\mu ^{\pm}$ are positive measures$\}$. This norm in some sense represents how close $\mu$ is to a positive measure. Indeed, $\mu$ is a positive measure iff $||\mu||_{neg}=0$. Then we define 
\begin{eqnarray*}
\kappa _{\mathcal{G}} = \min _{\mu \in \overline{\mathcal{G}}}||\mu ||_{neg}. 
\end{eqnarray*}
Then we define $\mathcal{G}_0:=\{\mu \in \overline{\mathcal{G}}:~ ||\mu ||_{neg}=\kappa _{\mathcal{G}}\}$. This is a compact set in the space of signed measures.

The least negative intersection $\Lambda (T_1,\ldots ,T_k)$ of $T_1,\ldots ,T_k$ is now defined as a sublinear operator on the space of continuous functions $\varphi$ on $X$, given by the formula: 
\begin{eqnarray*}
\Lambda (T_1,\ldots ,T_k) (\varphi )=\sup _{\mu \in \mathcal{G}_0}\mu (\varphi ).
\end{eqnarray*}
 
We say that  such a least negative intersection is non-negative if for all non-negative continuous functions $\varphi$ we have $\Lambda (T_1,\ldots ,T_k)(\varphi )\geq 0$. In the general case, non-negative least negative intersection can be regarded as the closest to what we have in the classical cases. It is shown in \cite{truong}, for $T_1=T_2=[L]$ the current of integration on a line $L\subset X=\mathbb{P}^2$, that for all continuous functions $\varphi$: 
\begin{eqnarray*}
\Lambda ([L],[L]) (\varphi ) =\sup _{x\in L}\varphi (x),
\end{eqnarray*}
and hence it is positive. Thus, we have a definition of the self-intersection of a line, which has not been done using other methods. 

More interestingly, in \cite{truong}, the following example is also given. Let $X=$ the blowup of $\mathbb{P}^2$ at a point $p$, and let $E\subset X$ be the exceptional divisor. Then the least negative intersection $\Lambda ([E],[E])$ acts on continuous functions $\varphi$ by the following formula: 
\begin{eqnarray*}
\Lambda ([E],[E])(\varphi )=\sup _{x\in E}-\varphi (x).
\end{eqnarray*}
Note that this is compatible with the fact that, in cohomology, the self-intersection of $E$ is $-1$. 

\section{Proofs of main theorems and generalisations}

We fix in this section compact K\"ahler 3-folds $X,Y $ and a pseudo-isomorphism map $f:X\dashrightarrow Y$. We fix also a resolution of singularity $Z$ of $\Gamma _f$, and let $\pi , h:Z\rightarrow X,Y$ the composition of $Z\rightarrow \Gamma _f$ and $\pi _X,\pi _Y:\Gamma _f\rightarrow X,Y$. We can choose $\pi $ and $h$ so that $\pi =\pi _m\circ \pi _{m-1}\circ \ldots \circ \pi _1$, where each $\pi _j$ is a blowup at a point or a smooth curve, and moreover the images by $\pi $ and $h$ of the exception divisors of $\pi$ are contained in $I(f)$ and $I(f^{-1})$ respectively.  

The next lemma, on dynamics of pseudo-isomorphisms in dimension $3$, is the key for the proofs of main results.  

\begin{lemma}
Let $T_1=T_1^{+}-T_1^{-}$ and $T_2=T_2^{+}-T_2^{-}$, where $T_i^{\pm}$ are positive closed $(1,1)$ currents on $Y$. Assume moreover that $T_{1}^{\pm} $ are smooth outside a curve. Then

1) $f^*(T_1\wedge T_2)$ and $f^*(T_1)\wedge f^*(T_2)$ are well-defined. 

2) The difference $f^*(T_1)\wedge f^*(T_2)-f^*(T_1\wedge T_2)$ has support in $I(f)$ and depends only on the cohomology classes of $T_1$ and $T_2$. 

3) Moreover,  if $\{T_1\}$ or $\{T_2\}$ satisfies the NIC condition, then $f^*(T_1)\wedge f^*(T_2)-f^*(T_1\wedge T_2)=0$.   
    
\label{Lemma1}\end{lemma}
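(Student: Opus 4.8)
The plan is to study the difference current
\begin{eqnarray*}
D:=f^*(T_1)\wedge f^*(T_2)-f^*(T_1\wedge T_2)
\end{eqnarray*}
directly on a resolution, show it is closed and carried by the indeterminacy curves, and then compute its coefficients in cohomology. For part 1, I would use that $I(f)$ and $I(f^{-1})$ are finite unions of curves (Bedford-Kim, \cite{bedford-kim}), so $f$ is holomorphic on $X\setminus I(f)$, and that by M\'eo \cite{meo} the pullback of a positive closed $(1,1)$ current on $Y$ is again such a current on $X$, which — since $f$ is a pseudo-isomorphism, hence contracts no divisor — is smooth off a curve whenever the original is. Then $f^*(T_1^{\pm})$ are positive closed $(1,1)$ currents smooth off a curve, $f^*(T_2^{\pm})$ are positive closed $(1,1)$ currents, and the wedge of a positive closed $(1,1)$ current with one whose local potentials are bounded off a set of zero $(2\dim X-2)$-dimensional Hausdorff measure is well defined by \cite{fornaess-sibony,demailly2} — here the set is a curve in a $3$-fold; bilinearity in $T_1,T_2$ gives $f^*(T_1)\wedge f^*(T_2)$ and likewise $T_1\wedge T_2$. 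Finally $f^*(T_1\wedge T_2):=\pi_*h^*(T_1\wedge T_2)$ is well defined by the graph-pullback construction of Section 2 applied to the positive closed $(2,2)$ currents $T_1^{\pm}\wedge T_2^{\pm}$, the trivial extensions involved being harmless precisely because the $T_1^{\pm}$ are smooth off a curve.

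For part 2, on $X\setminus I(f)$ the map $f$ is holomorphic and pullback by a holomorphic map commutes with the Bedford-Taylor wedge product wherever the latter is defined, so $D=0$ there and the support of $D$ is contained in the curve $I(f)$. Also $D$ is closed: from $(\Omega_1+dd^cu_1)\wedge(\Omega_2+dd^cu_2)=\Omega_1\wedge\Omega_2+dd^c(u_2\Omega_1+u_1\Omega_2+u_1\,dd^cu_2)$ one sees $f^*(T_1)\wedge f^*(T_2)$ is closed, $T_1\wedge T_2$ is closed for the same reason, hence so is $f^*(T_1\wedge T_2)$. Being a difference of positive closed currents, $D$ is normal; and a closed normal $(2,2)$ current on a compact $3$-fold supported on a curve equals $\sum_{C\subset I(f)}\lambda_C[C]$ for real constants $\lambda_C$, by the support theorem. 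So 2) and 3) reduce to identifying the $\lambda_C$.

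This identification is the core of the proof and is where the structure of pseudo-isomorphisms in dimension $3$ is used. On $Z$ write $\pi^*f^*(T_i)=h^*(T_i)+\Theta_i$; comparing the two sides over $Z$ minus the exceptional divisors of $\pi$ shows $\Theta_i$ is a closed $(1,1)$ current supported there, and functoriality of the (M\'eo) pullback along $h=f\circ\pi$ shows $\Theta_i$ is unchanged if $T_i$ is replaced by a cohomologous current, so $\Theta_i$ depends linearly on $\{T_i\}$ alone. The projection formula then yields
\begin{eqnarray*}
D=\pi_*(\Theta_1\wedge h^*(T_2)+h^*(T_1)\wedge\Theta_2+\Theta_1\wedge\Theta_2)+\pi_*(h^*(T_1)\wedge h^*(T_2)-h^*(T_1\wedge T_2)),
\end{eqnarray*}
where the last bracket is the exceptional-supported defect of the modification $h$ — it vanishes when $T_1,T_2$ are smooth and in general is controlled using that $T_1$ is smooth off a curve. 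Every term on the right is supported on the (common) exceptional divisors $\bigcup E$, and using their ruled-surface structure over the indeterminacy curves $C'=h(E)\subset I(f^{-1})$ one can evaluate the resulting intersection numbers: after pushing forward by $\pi$ this gives $\lambda_C$ as an explicit bilinear expression in $\{T_1\}$ and $\{T_2\}$ — up to a geometric constant, a sum over the $E$ lying over $C$ of $(\{T_1\}.\{C'\})(\{T_2\}.\{C'\})$. This proves the cohomological dependence in 2); and if $\{T_1\}$ or $\{T_2\}$ satisfies the NIC condition then every such product vanishes, so $\lambda_C=0$ for all $C$ and $D=0$, which is 3).

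The step I expect to be the main obstacle is precisely this last computation: pinning down the defect currents $\Theta_1,\Theta_2$, the defect $h^*(T_1)\wedge h^*(T_2)-h^*(T_1\wedge T_2)$ (which is delicate because $u_1 T_2$ is not closed, so its pullback need not commute with $dd^c$), and above all the term $\Theta_1\wedge\Theta_2$ — a wedge of two currents singular along the very same divisors — and then checking that after $\pi_*$ all of this is governed by intersection numbers against the curves of $I(f^{-1})$. This is where $\dim X=3$ really matters: it forces $I(f)$ to be a curve and the exceptional divisors to be ruled surfaces over it, keeping the bookkeeping finite and reducing it to intersection theory on those surfaces.
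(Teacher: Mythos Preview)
Your setup for parts 1) and the first half of 2) --- well-definedness via \cite{fornaess-sibony,demailly2}, support of $D$ in $I(f)$, closedness, and the support theorem giving $D=\sum_{C\subset I(f)}\lambda_C[C]$ --- is correct and matches the paper. The divergence, and the gap, is in how you propose to compute the $\lambda_C$.

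You work globally on $Z$ via the decomposition $\pi^*f^*(T_i)=h^*(T_i)+\Theta_i$ and then expand. As you yourself flag, the term $\Theta_1\wedge\Theta_2$ is a wedge of two divisorial currents carried by the \emph{same} exceptional divisors, so it is not classically defined; and the implicit projection formula $f^*(T_1)\wedge f^*(T_2)=\pi_*\bigl(\pi^*f^*(T_1)\wedge\pi^*f^*(T_2)\bigr)$ that underlies your expansion has its own defect term of exactly the kind you are trying to compute. So the argument is circular at the crucial point. A second, smaller issue: the claim that $\Theta_i$ depends only on $\{T_i\}$ is clear when $T_i$ is smooth (then $f^*(T_i)-f^*(T_i')=dd^c(u\circ f)$ with $u\circ f$ bounded, so Lelong numbers along $I(f)$ agree), but your $T_2^{\pm}$ are arbitrary positive closed $(1,1)$ currents, and there the argument does not go through as stated.

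The paper sidesteps all of this by an inductive device you are missing: instead of comparing $\pi_*$ and $h^*$ in one shot, it factors $\pi=\pi_m\circ\cdots\circ\pi_1$ into single blowups of points or smooth curves and computes the defect of each $\pi_j$ separately. For a point blowup the defect is zero by dimension; for a curve blowup with center $D_1$ and fiber $F_1$ the paper invokes the explicit formula
\[
(\pi_1)_*\bigl((\pi_1)^*(\pi_1)_*(\alpha)\wedge\beta\bigr)-(\pi_1)_*(\alpha)\wedge(\pi_1)_*(\beta)=\{\alpha.F_1\}\{\beta.F_1\}[D_1]
\]
from \cite{truong5}, together with a short approximation argument to identify the left-hand side with $(\pi_1)_*(\alpha\wedge\beta)-(\pi_1)_*(\alpha)\wedge(\pi_1)_*(\beta)$. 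This one-step formula involves only smooth forms (or forms smooth off a curve, handled by approximation), so the ill-defined self-intersections you run into simply never appear. Iterating through the tower gives each $\lambda_C$ as a polynomial in the numbers $\{(\pi_m\circ\cdots\circ\pi_j)_*h^*(T_i)\}\cdot\{F_j\}$, which by the projection formula reduce to $\{T_i\}\cdot h_*\{D\}$ for curves $D$ in the exceptional divisors; since $h(D)\subset I(f^{-1})$, part 3) follows immediately from NIC. Your guessed shape for $\lambda_C$ --- a sum of products $(\{T_1\}.\{C'\})(\{T_2\}.\{C'\})$ over $C'\subset I(f^{-1})$ --- is morally right for a single blowup but is not the full answer once the tower has several levels; the inductive bookkeeping is exactly what the step-by-step approach organizes.
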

\begin{proof}
1) Since $f$ is a pseudo-isomorphism in dimension $3$, by assumptions on $T_1^{\pm}$ it follows that $f^*(T_1^{\pm})$ are positive closed $(1,1)$ currents which are smooth outside some curves. Therefore, by dimensional reason we have that $f^*(T_1^{\pm})\wedge f^*(T_2^{\pm})$ are well-defined, and hence so is $f^*(T_1)\wedge f^*(T_2)$. 

 Since $S=T_1\wedge T_2$ is a positive closed $(2,2)$ current and $f$ is a pseudo-isomorphism in dimension $3$, it follows from \cite{truong1, truong2} that $f^*(T_1\wedge T_2)$ is well-defined. Moreover, if we take $\{S_n\}$ be any sequence of smooth approximations of $S$ constructed by Dinh and Sibony (mentioned in the previous section), then $\lim _{n\rightarrow\infty}f^*(S_n)$ exists and is the same as $f^*(T_1\wedge T_2)$. 
 
 2) We first prove the assertion for the case where $T_1$ and $T_2$ are both smooth. Then
 \begin{eqnarray*}
 f^*(T_1)=\pi _*h^*(T_1),~f^*(T_2)=\pi _*h^*(T_2),~f^*(T_1\wedge T_2)=\pi _*(h^*(T_1)\wedge h^*(T_2)).
 \end{eqnarray*}
Here $\pi _*=(\pi _m)_*\circ \ldots \circ (\pi _1)_*$. Define $\alpha =h^*(T_1)$ and $\beta =h^*(T_2)$, which are also smooth closed $(1,1)$ forms. 

First, we consider the blowup $\pi _1$. There are two cases to consider.

Case 1: $\pi _1$ is the blowup of one point. Then it follows from dimensional reason that the two $(2,2)$ currents $(\pi _1)_*(\alpha )\wedge (\pi _1)_*(\beta )$ and $(\pi _1)_*(\alpha \wedge \beta )$, which coincide outside the centre of the blowup $\pi _1$, must be the same. 

Case 2: $\pi _1$ is the blowup at a smooth curve $D_1$. Let $F_1$ (which is isomorphic to $\mathbb{P}^1$) be a fibre of the restriction of the blowup $\pi _1$ to the exceptional divisor $\pi _1^{-1}(D_1)$. From the results in \cite{truong5} we have that 
\begin{equation}
(\pi  _1)_*((\pi _1)^*(\pi _1)_*(\alpha )\wedge \beta )-(\pi _1)_*(\alpha )\wedge (\pi _1)_*(\beta )=\{\alpha .F_1\}\{\beta .F_1\}[D_1].
\label{Equation1}\end{equation}
Here $\{\alpha .F_1\}$ and $\{\beta .F_1\}$ are intersections in cohomology, and hence depending only on the cohomology classes of $\alpha ,\beta$, and $[D_1]$ is the current of integration over the curve $D_1$. Hence $(\pi _1)_*(\alpha )\wedge (\pi _1)_*(\beta )-(\pi _1)_*(\alpha \wedge \beta )$ depends only on the cohomology classes of $T_1$ and $T_2$,  if we can show that  the currents $(\pi  _1)_*((\pi _1)^*(\pi _1)_*(\alpha )\wedge \beta )$ and $(\pi _1)_*(\alpha )\wedge (\pi _1)_*(\beta )$ are the same. If we consider a smooth approximation $S_n$ of $(\pi _1)_*(\alpha )$, then we have
\begin{eqnarray*}
(\pi  _1)_*((\pi _1)^*(\pi _1)_*(\alpha )\wedge \beta )&=&\lim _{n\rightarrow\infty}(\pi _1)_*(\pi _1^*(S_n)\wedge \beta )\\
&=&\lim _{n\rightarrow\infty}S_n\wedge (\pi _1)_*(\beta )=(\pi _1)_*(\alpha )\wedge (\pi _1)_*(\beta ).
\end{eqnarray*}
The second equality follows from the projection formula, while the third equality follows from the fact that $(\pi _1)_*(\alpha )$ and $(\pi _1)_*(\beta )$ are differences of positive closed $(1,1)$ currents smooth outside curves, and that we are working in dimension $3$. 

Having shown the desired result for the blowup $\pi _1$, we can proceed to $\pi _2$  and other blowups in the same manner to obtain the desired result for the case where $T_1$ and $T_2$ are smooth. We only need to note that when we proceed to $\pi _2$, the currents $(\pi _1)_*(\alpha )$ and $(\pi _1)_*(\beta )$ are not smooth, but only smooth outside of a curve. Therefore, before applying the result in \cite{truong5}, we must first approximate $(\pi _1)_*(\alpha )$ and $(\pi _1)_*(\beta )$ by smooth closed $(1,1)$ forms, and use that when passing to the limit we obtain the desired results. 

For the general case where $T_1$ and $T_2$ are not smooth, we can use smooth approximations as above and proceed similarly. 

3) We can assume that $\{T_1\}$ satisfies the NIC condition. From the proof of 2), it follows that $f^*(T_1\wedge T_2)=f^*(T_1)\wedge f^*(T_2)$ if for all $j=1,\ldots ,m$ we have in cohomology $(\pi _m\circ \ldots \circ \pi _j)_*(h^*\{T_1\}).\{F_{j}\}=0$. By the projection formula, the same conclusion follows if $h^*\{T_1\}.(\pi _m\circ \ldots \circ \pi _j)^*\{F_j\}=0$ for all $j$. Note that the cohomology class of $(\pi _m\circ \ldots \circ \pi _j)^*\{F_j\}$ is generated by that of curves contained in the exceptional divisors of $\pi$. Therefore, by the projection formula again, the same conclusion follows if $\{T_1\}.h_*\{D\}=0$ for all curves $D$ contained in the exceptional divisors of $\pi$. By our assumption on the resolution of singularities $Z$ of the graph $\Gamma _f$, for any such $D$ the image $h(D)$ is contained in $I(f^{-1})$. Hence, we conclude that if $\{T_1\}.\{C\}=0$ for all  curves $C\subset I(f^{-1})$, then $f^*(T_1\wedge T_2)=f^*(T_1)\wedge f^*(T_2)$.   
\end{proof}

\subsection{Proof of Theorem \ref{TheoremSufficientCondition}}

1) We need to show that for all smooth functions $\varphi$ on $X$ and all good approximation $T_n$ of $T$  then
\begin{eqnarray*}
\lim _{n\rightarrow \infty}\int _X\varphi T_n \wedge f^*(\alpha _2\wedge \alpha _3) 
\end{eqnarray*}
exists. Note that for each $n$ then the integral equals
\begin{eqnarray*}
\int _Xf_*(\varphi T_n )\wedge \alpha _2\wedge \alpha _3. 
\end{eqnarray*}
Hence, it suffices to show that $\lim _{n\rightarrow\infty}f_*(\varphi T_n)$ exists. Note that $\varphi T$ is a DSH $(1,1)$ current, and $\varphi T_n$ is a good approximation of $\varphi T$, in the sense of Dinh and Sibony. Then the claim that  $\lim _{n\rightarrow\infty}f_*(\varphi T_n)$ exists follows from Theorem 1 in \cite{truong2} (see also \cite{truong1}), and the limit is denoted by $f_*(\varphi T)$ which is also a DSH current. 

Now consider the case where $T_1=f^*(\alpha _1)$, where $\alpha _1$ is a smooth closed $(1,1)$ form. In this case we need to show moreover that
\begin{eqnarray*}
\lim _{n\rightarrow \infty}\int _Xf_*(\varphi f^*(\alpha _1)) \wedge \alpha _2\wedge \alpha _3 =\int _{X} \varphi f^*(\alpha _1\wedge \alpha _2\wedge \alpha _3).
\end{eqnarray*} 
To this end, we can assume that $\varphi$, $\alpha _1$, $\alpha _2$ and $\alpha _3$ are all positive. First, we show that the positive measure $f_*(\varphi f^*(\alpha _1)) \wedge \alpha _2\wedge \alpha _3$ has no mass on proper subsets of $Y$. In fact, assume that $\varphi \leq C$. Then $f_*(\varphi f^*(\alpha _1)) \wedge \alpha _2\wedge \alpha _3$ is bounded from above by the measure $Cf_*(f^*(\alpha _1))\wedge \alpha _2\wedge \alpha _3$. Now, by Theorem 1 in \cite{truong2}, it follows that $f_*(f^*(\alpha _1))=\alpha _1$, and hence $f_*(\varphi f^*(\alpha _1)) \wedge \alpha _2\wedge \alpha _3$ is bounded from above by the smooth measure $C\alpha _1\wedge \alpha _2\wedge \alpha _3$, and hence has no mass on any proper subset of $Y$. Hence, $f_*(\varphi f^*(\alpha _1)) \wedge \alpha _2\wedge \alpha _3$ is the extension by $0$ of the its restriction to $Y\backslash I(f^{-1})$. On this latter set, we see easily that $f_*(\varphi f^*(\alpha _1)) \wedge \alpha _2\wedge \alpha _3=(f_*)(\varphi )\alpha _1\wedge \alpha _2\wedge \alpha _3$. From this the proof is completed.   

2) Assume that  $T_1,f^*(\alpha _2),f^*(\alpha _3)$ satisfy the BTC condition. By Lemma \ref{Lemma1}, we can write $f^*(\alpha _2)\wedge f^*(\alpha _3)=f^*(\alpha _2\wedge \alpha _3)+\sum _{C\subset I(f)}\lambda _{C,\alpha _2,\alpha _3}[C]$, where the sum is on all irreducible curves $C\subset I(f)$, and $\lambda _{C,\alpha _2,\alpha _3}$ depend only on the cohomology classes of $\alpha _2$ and $\alpha _3$. 

Consider any representation $[T_1,\Omega _1,u_1^+,u_1^-]$ and a good approximation $(\{u_{1,n}^+,u_{1,n}^{-}\})$ of $(u_{1}^+,u_1^-)$. By the assumption we have that for all smooth functions $\varphi $
\begin{eqnarray*}
\mu _{T_1,f^*(\alpha _2),f^*(\alpha _3)}(\varphi )&=&\lim _{n\rightarrow\infty}\int _X\varphi (\Omega _1+dd^cu_{1,n}^+-dd^cu_{1,n}^-)\wedge f^*(\alpha _2)\wedge f^*(\alpha _3)\\
&=&\lim _{n\rightarrow\infty}\int _X\varphi (\Omega _1+dd^cu_{1,n}^+-dd^cu_{1,n}^-)\wedge (f^*(\alpha _2\wedge \alpha _3)+\sum _{C\subset I(f)}\lambda _{C,\alpha _2,\alpha _3}[C]).
\end{eqnarray*}
 
By part 1, we have that 
\begin{eqnarray*}
\lim _{n\rightarrow\infty}\int _X\varphi (\Omega _1+dd^cu_{1,n}^+-dd^cu_{1,n}^-)\wedge f^*(\alpha _2\wedge \alpha _3)=\int _Xf_*(\varphi T_1)\wedge \alpha _2\wedge \alpha _3. 
\end{eqnarray*}

So it is sufficient to show that there is a representation $[T_1,\Omega _1,u_1^+,u_1^-]$ and a good approximation $(\{u_{1,n}^+,u_{1,n}^{-}\})$ of $(u_{1}^+,u_1^-)$ so that 
\begin{eqnarray*}
\lim _{n\rightarrow\infty}\int _X\varphi (\Omega _1+dd^cu_{1,n}^+-dd^cu_{1,n}^-)\wedge \sum _{C\subset I(f)}\lambda _{C,\alpha _2,\alpha _3}[C]=0.
\end{eqnarray*}

We now use that there is a representation $[T_1,\Omega _1, u_1^+,u_1^-]$ of $T_1$ so that $\lim _{x\rightarrow I(f)}u_1(x)^{\pm}=-\infty$. A simple way is to consider a blowup $p:X'\rightarrow X$ so that the preimages of all curves in $I(f)$ are divisors. Let $\omega '$ be any K\"ahler form on $X'$, then $p_*(\omega ')$ is a positive closed $(1,1)$ current with non-zero Lelong numbers along every curve in $I(f)$. We can also arrange that $p_*(\omega ')$ is smooth outside $I(f)$, by choosing appropriate blowups. Then  any quasi-potential $v$ of $p_*(\omega ')$ will have the property that $\lim _{x\rightarrow I(f)}v(x)=-\infty$. Moreover, $v$ is smooth outside $I(f)$. If we replace one representation $[T_1,\Omega _1, u_1^+,u_1^-]$ by $[T_1,\Omega _1, u_1^++v,u_1^-+v]$, using the fact that quasi-psh functions are bounded from above, and the properties of Lelong numbers, then we have the claim about the representation. [For better and more general constructions, see M\'eo \cite{meo} for projective spaces, and Vigny \cite{vigny} for compact K\"ahler manifolds.]

Now we consider the following sequence of quasi-psh functions $v_{1,n}^{\pm}=\max \{u_1^{\pm},-n\}$, with $u_1^{\pm}$ having the properties in the previous paragraph. For each $n$, there is an open neighbourhood of $I(f)$ on which $v_{1,n}^{\pm}$ are  the constant $-n$. In particular, $dd^c v_{1,n}^{\pm}=0$ in a neighbourhood of $I(f)$ and hence 
\begin{eqnarray*}
\lim _{n\rightarrow\infty} \int _X\varphi (\Omega _1+dd^cv_{1,n}^+-dd^cv_{1,n}^-)\wedge \sum _{C\subset I(f)}\lambda _{C,\alpha _2,\alpha _3}[C]=\int _X\varphi \Omega _1\wedge  \sum _{C\subset I(f)}\lambda _{C,\alpha _2,\alpha _3}[C].
\end{eqnarray*}

While $v_{1,n}^{\pm}$ are not smooth, we can construct a good approximation $(u_{1,n}^+,u_{1,n}^-)$ of $(u_1^+,u_1^-)$ such that (see Lemma 3.7 in \cite{truong})
\begin{eqnarray*}
\lim _{n\rightarrow\infty} \int _X\varphi dd^cu_{1,n}^{\pm}\wedge \sum _{C\subset I(f)}\lambda _{C,\alpha _2,\alpha _3}[C]=\lim _{n\rightarrow\infty} \int _X\varphi dd^cv_{1,n}^{\pm}\wedge \sum _{C\subset I(f)}\lambda _{C,\alpha _2,\alpha _3}[C],
\end{eqnarray*}
for all $\varphi$. Hence, from the BTC condition we find that: 
\begin{eqnarray*}
\mu _{T_1,f^*(\alpha _2),f^*(\alpha _3)}(\varphi )-\int _Xf_*(\varphi T_1)\wedge \alpha _2\wedge \alpha _3= \int _X\varphi \Omega _1\wedge  \sum _{C\subset I(f)}\lambda _{C,\alpha _2,\alpha _3}[C],
\end{eqnarray*}
for all choice of a smooth representative $\Omega _1$ of $T_1$. 

We will now conclude the proof by showing that $\lambda _{C,\alpha _2,\alpha _3}=0$ for all curves $C\subset I(f)$. Assume otherwise that there is a curve $C_0\subset I(f)$ so that $\lambda _{C_0,\alpha _2,\alpha _3}\not= 0$, we will obtain a contradiction as follows. We choose then a smooth point $x_0\in C$ and a small open neighbourhood $W$ of $x_0$ in $X$ which does not intersect other curves in $I(f)$. Choose $v$ a smooth function on $X$ so that $dd^c v$ is a K\"ahler form in $W$. Choose $\varphi $ a non-negative smooth function which has support in $W$ and which is $1$ near $x_0$. Then from the above we have, by choosing the two smooth representatives $\Omega _1$ and $\Omega _1+dd^c v$:
\begin{eqnarray*}
0= \int _X\varphi dd^cv\wedge  \sum _{C\subset I(f)}\lambda _{C,\alpha _2,\alpha _3}[C]= \lambda _{C,\alpha _2,\alpha _3}\int _X \varphi dd^cv\wedge [C_0],
\end{eqnarray*}
 which is the desired contradiction. 
 
 3) That the BTC condition is equivalent to that $f^*(\alpha _2)\wedge f^*(\alpha _3)=f^*(\alpha _2\wedge \alpha _3)$ can be seen from the proof of 2). That the condition $f^*(\alpha _2)\wedge f^*(\alpha _3)=f^*(\alpha _2\wedge \alpha _3)$ depends only on the cohomology classes of $\alpha _2,\alpha _3$ is shown in Lemma \ref{Lemma1}. By Lemma \ref{Lemma1} again, if $\alpha _2$ or $\alpha _3$ satisfies the NIC condition, then $f^*(\alpha _2)\wedge f^*(\alpha _3)=f^*(\alpha _2\wedge \alpha _3)$, and hence the BTC condition is satisfied. 
 
 4) If $T_1$ is smooth, then we have a representation $[T_1,T_1,0,0]$, and the assertion is clear. 
 
 \subsection{Proof of Theorem \ref{TheoremMain}}

1) The assertion follows from the following claim. 

Claim. Let $[T_1,\Omega _1, u_1^+,u_1^-]$ be a representation of $T_1$, and let $\{u_{1,n}^+,u_{1,n}^{-}\}$ be a good approximation of $(u_1^+,u_1^-)$. Assume that $\lim _{n\rightarrow \infty}(\Omega +dd^cu_{1,n}^+-dd^cu_{1,n}^{-})\wedge f^*(\alpha _2)\wedge f^*(\alpha _3)=\mu$. Let $\widetilde{\alpha _2}$ be a smooth closed $(1,1)$ form cohomologous to $\alpha _2$. Then 
\begin{eqnarray*}
\lim _{n\rightarrow \infty}(\Omega +dd^cu_{1,n}^+-dd^cu_{1,n}^{-})\wedge f^*(\widetilde{\alpha _2})\wedge f^*(\alpha _3)=\mu +T_1\wedge f^*((\widetilde{\alpha _2}-\alpha _2)\wedge \alpha _3).
\end{eqnarray*}
A similar result holds for $\alpha _3$ in the place of $\alpha _2$. 

Proof of Claim. Since the cohomology class of $\widetilde{\alpha _2}-\alpha _2$ is $0$, it trivially satisfies the NIC condition. Therefore, by Theorem \ref{TheoremMain} the triple $T_1,f^*(\widetilde{\alpha _2})-f^*(\alpha _2), f^*(\alpha _3)$ satisfies the BTC condition. In particular
\begin{eqnarray*}
\lim _{n\rightarrow \infty}(\Omega +dd^cu_{1,n}^+-dd^cu_{1,n}^{-})\wedge f^*(\widetilde{\alpha _2}-\alpha _2)\wedge f^*(\alpha _3)=T_1\wedge f^*((\widetilde{\alpha _2}-\alpha _2)\wedge \alpha _3).
\end{eqnarray*}

From this the claim follows. (Q.E.D.)

2) If $[T_1,\Omega _1,u_1^+,u_1^-]$ is a representation of $T_1$, then $[T_1,\Omega _1,u_1^++v,u_1^-+v]$ is also a representation, where $v$ is any quasi-psh function. We first show the following: 

Claim 1. Let $v$ be a quasi-psh function so that $\lim _{x\rightarrow I(f)}v(x)=-\infty$. Under the same assumptions as in part 2 of  Theorem \ref{Theorem2}, we have $\mu _{[T_1,\Omega _1,u_1^+,u_1^-], f^*(\alpha _2),f^*(\alpha _3)}=\mu _{[T_1,\Omega _1,u_1^++v, u_1^-+v],f^*(\alpha _2),f^*(\alpha _3)}$. 

Proof of Claim 1. Let $(u_{1,n}^+,u_{1,n}^-)$ be a good approximation of $(u_1^+,u_1^-)$. Then, as in the proof of part 2 of Theorem \ref{TheoremSufficientCondition}, we can find a good approximation $\{v_{n}\}$ of $v$ so that 
\begin{eqnarray*}
\lim _{n\rightarrow\infty} dd^cv_{n}\wedge [C]=\lim _{n\rightarrow\infty}dd^c \max\{v,-n\}\wedge [C]=0,
\end{eqnarray*}
for all curves $C\subset I(f)$. Since $(u_{1,n}^+ + v_{n},u_{1,n}^-+v_{n} )$ is a good approximation of $(u_1^++v,u_1^-+v)$, the Claim follows. (Q.E.D.)

Now we continue the proof of the theorem. We show next that for any representation $[T_1,\Omega _1,u_1^+,u_1^{-}]$ then 

Claim 2. 
\begin{eqnarray*}
\mu _{[T_1,\Omega _1, u_1^+,u_1^-],\alpha _2,\alpha _3}=T_1\wedge f^*(\alpha _2\wedge \alpha _3) + \Omega _1\wedge (f^*(\alpha _2)\wedge f^*(\alpha _3)-f^*(\alpha _2\wedge \alpha _3)). 
\end{eqnarray*}
Proof of Claim 2. In fact, using a good approximation $(u_{1,n}^+,u_{1,n}^-)$ of $(u_1^++v,u_1^-+v)$, where $v$ is as in the statement of Claim 1, so that
\begin{eqnarray*}
\lim _{n\rightarrow\infty} dd^cu_{1,n}^{\pm}\wedge [C]=\lim _{n\rightarrow\infty}dd^c \max\{u_1^{\pm},-n\}\wedge [C]=0,
\end{eqnarray*} 
for all curves $C\subset I(f)$, we obtain Claim 2. (Q.E.D.) 

Now if $\widetilde{\Omega _1}$ is another smooth closed $(1,1)$ form cohomologous to $T_1$, we have that $\widetilde{\Omega _1}-\Omega _1=dd^c u$ for some smooth function $u$. Then $[T_1,\widetilde{\Omega _1}, u_1^+, u_1^-+u]$ is another representation for $T_1$. If $(u_{1,n}^+,u_{1,n}^-)$ is a good approximation for $(u_1^+,u_1^-)$, then $(u_{1,n}^+,u_{1,n}^-+u)$ is a good approximation for $(u_1^+,u_1^-+u)$. Therefore, we have by definition: 
\begin{eqnarray*}
\mu _{T_1,\widetilde{\Omega _1},f^*(\alpha _2),f^*(\alpha _3)}&=&\lim _{n\rightarrow\infty}(\widetilde{\Omega _1}+dd^c(u_{1,n}^+)-dd^c(u_{1,n}^-+u))\wedge f^*(\alpha _2)\wedge \alpha _3\\
&=& \lim _{n\rightarrow\infty}({\Omega _1}+dd^c(u_{1,n}^+)-dd^cu_{1,n}^-)\wedge f^*(\alpha _2)\wedge \alpha _3\\
&=&\mu _{T_1.\Omega _1,f^*(\alpha _2),f^*(\alpha _3)}. 
\end{eqnarray*}
Hence, the proof of Theorem \ref{TheoremMain} is completed. 

{\bf Remark.} We can either use the same argument as in the proof of part 2 of Theorem \ref{TheoremSufficientCondition} or apply that theorem directly, to obtain that under the assumptions of part 2 of Theorem \ref{TheoremMain}, then $\mu _{T_1,f^*(\alpha _2),f^*(\alpha _3)}=T_1\wedge f^*(\alpha _2\wedge \alpha _3)$. 

\subsection{Proof of Theorem \ref{TheoremWeakBTCCondition}}

1) By definition of the Weak BTC condition, there is a signed measure $\mu _{[T_1,\Omega _1,u_{1}^+,u_1^-],f^*(\alpha _2),f^*(\alpha _3)}$ such that for all good approximations $(\{u_{1,n}^+\}, \{u_{1,n}^-\})$ of $(u_1^+,u_1^-)$ we have:
\begin{eqnarray*}
\lim _{n\rightarrow\infty}(\Omega _1+dd^cu_{1,n}^+-dd^cu_{1,n}^-)\wedge f^*(\alpha _2)\wedge f^*(\alpha _3)=\mu _{[T_1,\Omega _1,u_{1}^+,u_1^-],f^*(\alpha _2),f^*(\alpha _3)}.
\end{eqnarray*}

We write $f^*(\alpha _2)\wedge f^*(\alpha _3)=f^*(\alpha _2\wedge \alpha _3)+\sum _{C\subset I(f)}\lambda _C[C]$, where $C$ runs all over curves in $I(f)$. As in the proof of Theorem \ref{TheoremSufficientCondition}, we then have
\begin{eqnarray*}
\lim _{n\rightarrow\infty} (\Omega _1+dd^cu_{1,n}^+-dd^cu_{1,n}^-)\wedge \sum _{C\subset I(f)}\lambda _C[C] =\mu _{[T_1,\Omega _1,u_{1}^+,u_1^-],f^*(\alpha _2),f^*(\alpha _3)}-T_1\wedge f^*(\alpha _2\wedge \alpha _3). 
\end{eqnarray*}

We now prove that if $C_0\subset I(f)$ so that $\lambda _{C_0}\not= 0$, then both $u_{1}^{\pm}$ have zero Lelong number at $C_0$. In fact, assume for example that $u_1^+$ has non-zero Lelong number at the curve $C_0$. We choose $\{u_{1,n}^{-}\}$ to be any good approximation of $u_1^-$. We now choose $v$ to be a smooth function on $X$. Then the sequence of (non-smooth) quasi-psh functions $v_n^+=\max \{u_1^++v,-n\}-v$ decreases to $u_1^+$ and satisfies:
\begin{eqnarray*}
\lim _{n\rightarrow\infty}dd^cv_n^+\wedge [C_0] = -dd^cv\wedge [C_0]. 
\end{eqnarray*}
Hence, using Lemma 3.7 in \cite{truong} as in the proof of part 2 of Theorem \ref{TheoremSufficientCondition}, we obtain a contradiction. Hence we must have that both $u_1^{\pm}$ have zero Lelong number on curves $C\subset I(f)$ with $\lambda _C\not= 0$. Therefore, by using Demailly's Approximation Theorem 2 as in the proof of Theorem \ref{TheoremSufficientCondition}, we have that $\{T_1^{\pm}\}.\{f^*(\alpha _2)\}.\{f^*(\alpha _3)\}\geq 0$. Moreover, by using Theorem 3.1 in \cite{truong} we have that both least negative intersection products $\Lambda (T_1^{\pm},\alpha _2,\alpha _3)$ are non-negative.  

2) When $T_1$ is a positive current,  then with the above representation $[T_1,\Omega _1,u_1^+,u_1^-]$ we have $u_1=u_1^+-u_1^-$ is also a quasi-psh function. Moreover, we also have that the Lelong number of $u_1$ (which is bounded from above by that of $u_1^+$) at all curves $C\subset I(f)$ so that $\lambda _C\not= 0$ is $0$. Hence, the same argument as above shows that $\{T_1\}.\{f^*(\alpha _2)\}.\{f^*(\alpha _3)\}\geq 0$ and the least negative intersection $\Lambda (T_1,f^*(\alpha _2),f^*(\alpha _3))$ is non-negative. 

To finish the proof, we show that if moreover  $T_1$ is positive and $\{T_1\}.\{f^*(\alpha _2)\}.\{f^*(\alpha _3)\}=\{T_1\}.\{f^*(\alpha _2\wedge \alpha _3)\}$, then $\Lambda (T_1,f^*(\alpha _2),f^*(\alpha _3))=T_1\wedge f^*(\alpha _2\wedge \alpha _3)$. By the above results, we have that $\Lambda (T_1,f^*(\alpha _2),f^*(\alpha _3))$ is non-negative, it is bounded from below by  $T_1\wedge f^*(\alpha _2\wedge \alpha _3)$, and it has the same mass as $T_1\wedge f^*(\alpha _2\wedge \alpha _3)$. Moreover, $\Lambda (T_1,f^*(\alpha _2),f^*(\alpha _3))$ equals $T_1\wedge f^*(\alpha _2\wedge \alpha _3)$ on $X\backslash I(f)$, and $T_1\wedge f^*(\alpha _2\wedge \alpha _3)$ has no mass on $I(f)$ (looking at $f_*(T_1)\wedge \alpha _2\wedge \alpha _3$ and argue as above). Therefore, we must have $\Lambda (T_1,f^*(\alpha _2),f^*(\alpha _3))=T_1\wedge f^*(\alpha _2\wedge \alpha _3)$  as desired.

The claim about Weak BTC condition with respect to the representations of the form $[T_1,\Omega _1,u_1,0]$ follows easily from the above discussion. 
  
\subsection{Proof of Theorem \ref{Theorem2}}

Before the proof of Theorem \ref{Theorem2}, let us mention in passing that by simple calculations it is evident that for the map $J_X$ even the pullback of $dd^cu$, where $u$ is  a smooth function, is quite singular. In fact, if we work in the affine chart $x_0=1$, then for a function $u(x_1,x_2,x_3)$  
\begin{eqnarray*}
J^*dd^cu=dd^cu\circ J=dd^cu(\frac{1}{x_1},\frac{1}{x_2},\frac{1}{x_3})
\end{eqnarray*}
will have singular of the order $\frac{1}{x_j^2\overline{x_k^2}}$ near the line $\{x_j=x_k=0\}$. 

Part 1 of Theorem \ref{Theorem2} provides precise information on the pullback of smooth closed $(1,1)$ forms by $J_X$. Now we proceed with its proofs. We start with some properties of $J_X$ which were proven in \cite{truong1}. 

Let $E_0,E_1,E_2,E_3\subset X$ be the exceptional divisors corresponding to the centre of blowups $e_0,e_1,e_2,e_3$. Let $H$ be a generic hyperplane in $\mathbb{P}^3$ and $H^2$ a generic line in $\mathbb{P}^2$. Let $L_i\subset E_i=\mathbb{P}^2$ be a generic line, for $i=0,1,2,3$. Then $H,E_0,E_1,E_2,E_3$ are a basis for $H^{1,1}(X)$ and $H^2,L_0,L_1,L_2,L_3$ are a basis for $H^{2,2}(X)$. The intersection in cohomology on  $X$ is given by
\begin{eqnarray*}
H.H^2&=&1,\\
 H.E_i&=&0,\\
 E_i.E_j&=&-\delta _{i,j}L_i,\\
 E_i.L_j&=&-\delta _{i,j}. 
\end{eqnarray*}
Here $\delta _{i,j}$ is 1 if $i=j$, and is $0$ otherwise. 

Let $C_{i,j}\subset \mathbb{P}^3$ be the line going through $e_i$ and $e_j$, and let $\widetilde{C_{i,j}}\subset X$ be the strict transform of $C_{i,j}$. Then the cohomology $\{\widetilde{C_{i,j}\}}$ is $H^2-L_i-L_j$. The curves $C_{i,j}$ are pairwise disjoint. The map $J_X=J_X^{-1}$ is pseudo-automorphic, and $I(J_X)=\bigcup _{i,j}\widetilde{C_{i,j}}$. 
 
The pullback in cohomology of $J_X$ on $H^{1,1}$ is as follows: 
\begin{eqnarray*}
J_X^*(H)&=&3H-2\sum E_i,\\
J_X^*(E_j)&=&H+E_j-\sum E_i.
\end{eqnarray*}

By duality, the pullback of $J_X$ on $H^{2,2}$ is as follows: 
\begin{eqnarray*}
J_X^*(H^2)&=&3H^2-\sum L_i,\\
J_X^*(L_j)&=&2H^2+L_j-\sum L_i.
\end{eqnarray*}

The pullback of $J_X$ on positive closed $(2,2)$ currents is quite interesting, we have a loss of positivity. For example, we obtain $J_X^*[\widetilde{C_{0,1}}]=-[\widetilde{C_{2,3}}]$ on currents of integration. This equality has been obtained by showing that if $S_n$ is a good approximation of $\widetilde{C_{0,1}}$, as mentioned in Section 2, then $\lim _{n\rightarrow\infty}J_X^*(S_n)=-[\widetilde{C_{2,3}}]$. In particular, we have in cohomology $(J_X)^*\{\widetilde{C_{0,1}}\}=-\{\widetilde{C_{2,3}}\}$, which can also be computed using the above description of the pullback of $J_X$ on the cohmology groups. 

1) We show for example that if $\alpha $ is a K\"ahler form on $X$ then in cohomology $\{J_X^*(\alpha )\}.\{\widetilde{C_{0,1}}\}<0$. In fact, we have
\begin{eqnarray*}
\{J_X^*(\alpha )\}.\{\widetilde{C_{0,1}}\}=\{\alpha \}.(J_X)_*\{\widetilde{C_{0,1}}\}=-\{\alpha \}.\{\widetilde{C_{2,3}}\}<0. 
\end{eqnarray*}
 
Let $\alpha $ be a K\"ahler form. If $J_X^*(\alpha )$ had zero Lelong number on, say, $\widetilde{C_{0,1}}$, then from the fact that $J_X^*(\alpha )$ is smooth outside $\bigcup _{i,j}\widetilde{C_{i,j}}$ and that $\widetilde{C_{i,j}}$ are pairwise disjoint, we would have by Demailly's Approximation Theorem 2 that there is a sequence of closed $(1,1)$ forms $S_n$ which converges to $J_X^*(\alpha )$ and smooth in a neighbourhood of $\widetilde{C_{0,1}}$ and a sequence of positive numbers $\epsilon _n$ decreasing to $0$ so that $S_n+\epsilon _n\alpha $ are positive for all $n$. Then in cohomology $\{S_n+\epsilon _n\alpha \}.\{\widetilde{C_{0,1}}\}\geq 0$ for all $n$. Taking the limit we would get $\{J_X^*(\alpha )\}.\{\widetilde{C_{0,1}}\}\geq 0$, which is a contradiction. Therefore, $J_X^*(\alpha )$ must have non-zero Lelong number on $\widetilde{C_{0,1}}$, as wanted. 

We note that $J_X^*(H)^3=(3H-2\sum E_i)^3=27-4\times 8=-5<0$. Since for small enough positive numbers $\epsilon _0,\ldots ,\epsilon _3$ the cohomology class $H-\sum \epsilon _iE_i$ is a K\"ahler class, it follows that there is a K\"ahler form $\alpha $ so that in cohomology $\{J_X^*(\alpha )\}^3<0$. 

2) The uniqueness of the class $\eta _0$, up to a multiplicative constant, is easy to deduce from the intersection in cohomology on $X$. We can choose indeed $\eta _0=2H-\sum _iE_i$. 

The fact that $\eta _0$ is nef can be implied from Kleiman's criterion for nefness and the easy description of the cone of curves of $X$. For another way to see this claim, see the next paragraph. 

If we take $W\subset \mathbb{P}^3$ to be a smooth surface of degree $2$ which contains $e_0,e_1,e_2,e_3$, then the strict transform of $W$ in $X$ has the cohomology class equal to $\eta _0$. If we use the method by M\'eo \cite{meo}, taking the average on all such $W$'s, we obtain a positive closed $(1,1)$ current $S_0$ on $\mathbb{P}^3$ which has mass $2$ and Lelong number $1$ at $e_0,e_1,e_2,e_3$, and is smooth outside these $4$ points. Hence, if $S$ is the strict transform of $S_0$ in $X$, then $S$ is smooth outside $\bigcup _iE_i$, and $S$ has the cohomology class equal to  $\eta _0$. 

3)   If $\pi :Z\rightarrow X$ is the blowup at $\bigcup _{i,j}\widetilde{C_{i,j}}$, then the lifting map $J_Z$ is an automorphism of $Z$. Hence, the map $h=J_X\circ \pi =\pi \circ J_Z$ is holomorphic. Therefore, we can use this data $(Z,\pi ,h)$ in the proof of part 2 of Lemma \ref{Lemma1}. 

If we take $E_{i,j}\subset Z$ be the exceptional divisor corresponding to the centre $\widetilde{C_{i,j}}$ of the blowup $\pi$, and let $L_{i,j}$ be a generic fibre of the restriction $\pi : E_{i,j}\rightarrow \widetilde{C_{i,j}}$, then $h(\widetilde{C_{i,j}})=\widetilde{C_{3-i,3-j}}$. Moreover, if $D_{i,j}\subset E_{i,j}$ is a section of the restriction $\pi : E_{i,j}\rightarrow \widetilde{C_{i,j}}$, then in cohomology: 
\begin{eqnarray*}
\pi ^*\{\widetilde{C_{i,j}}\}&=&\{D_{i,j}\}-\{L_{i,j}\},\\
h_*\{D_{i,j}\}&=&0,\\
h_\{L_{i,j}\}&=&\{\widetilde{C_{3-i,3-j}}\}.
\end{eqnarray*}
For example, $h_*\{L_{0,1}\}=\widetilde{C_{2,3}}$. From this description,  by the proof of Lemma \ref{Lemma1} we have 
\begin{eqnarray*}
J_X^*(\alpha _2)\wedge J_X^*(\alpha _3)=J_X^*(\alpha _2\wedge \alpha _3) + \sum _{i,j} \{\alpha _2.\widetilde{C_{i,j}}\}.\{\alpha _3.\widetilde{C_{i,j}}\}[\widetilde{C_{3-i,3-j}}].
\end{eqnarray*}
Hence $J_X^*(\alpha _2)\wedge J_X^*(\alpha _3)=J_X^*(\alpha _2\wedge \alpha _3)$  iff for every $i,j$ we  have either $ \{\alpha _2.\widetilde{C_{i,j}}\}=0$ or $ \{\alpha _3.\widetilde{C_{i,j}}\}=0$. Equations of the form $ \{\alpha _2.\widetilde{C_{i,j}}\}=0$ and $ \{\alpha _3.\widetilde{C_{i,j}}\}=0$ are  linear and homogeneous in the cohomology classes of $\alpha _2$ and $\alpha _3$. Since we have $6$ such equations and the group $H^{1,1}(X)$ has dimension $5$,   we have that the solution set is non-empty and is a finite union of linear subspaces of $H^{1,1}(X)\times H^{1,1}(X)$, each of dimension at least $4$. 
 
 By solving these systems of linear equations, we find many such pairs $(\{\alpha _2\},\{\alpha _3\})$ which are nef classes. For example, $\{\alpha _2\}=a_2\eta _0+b_2(H-E_2-E_3)$ and $\{\alpha _3\}=a_3 \eta _0+b_3(H-E_1-E_3)$, with $a_2,b_2,a_3,b_3\geq 0$, are such pairs of nef classes.
 
 The assertion for the case $\alpha _2=\alpha _3$ is obvious. 
 
 4) Write $J_X^*(\alpha _2)\wedge J_X^*(\alpha _3) =J_X^*(\alpha _2\wedge \alpha _3)+\sum _{i,j}\{\alpha _2.\widetilde{C_{i,j}}\}\{\alpha _3.\widetilde{C_{i,jj}}\}[C_{3-i,3-j}]$. 
 
 By Theorem \ref{TheoremWeakBTCCondition}, we have in this case that the Lelong number of $f^*(\alpha _1)$ is zero at all curves $\widetilde{C_{3-i,3-j}}$ for which $\{\alpha _2.\widetilde{C_{i,j}}\}\{\alpha _3.\widetilde{C_{i,j}}\}\not= 0$. Then the proof of part 1 shows that we must have $\{\alpha _1.\widetilde{C_{i,j}}\}=0$ for such curves. 
 
 If $\alpha _1=\alpha _2=\alpha _3$, then it follows from the above that $f^*(\alpha _1)$, $f^*(\alpha _2)$ and $f^*(\alpha _3)$ satisfy the Weak BTC condition iff $\{\alpha _1\}$ satisfies the NIC condition, and hence $f^*(\alpha _1)$, $f^*(\alpha _2)$ and $f^*(\alpha _3)$ satisfy the BTC condition. Moreover, by Theorem \ref{TheoremWeakBTCCondition} we also have $\Lambda (f^*(\alpha _1),f^*(\alpha _2),f^*(\alpha _3))=f^*(\alpha _1)\wedge f^*(\alpha _2)\wedge f^*(\alpha _3)$. 
  
\subsection{Some generalisations} We can check that the proofs of the results proven so far in this paper apply to more general settings. 

In the key lemma, Lemma \ref{Lemma1}, it suffices to assume that $\alpha _1$ is smooth outside a variety $V_1$ and $\alpha _2$ is smooth outside a variety $V_2$, so that $V_1\cap V_2$ has dimension at most $1$. 

Therefore, to extend the results, we only need the condition so that $\lim _{n\rightarrow\infty}T_{1,n}\wedge f^*(\alpha _2\wedge \alpha _3)$ exists, for all good approximations $T_{1,n}$ of $T_1$. Analysing the proof of part 1 in Theorem \ref{TheoremSufficientCondition}, and using as mentioned previously that $f^*,f_*$ are bijective on the space of positive closed $(1,1)$ currents for pseudo-isomorphisms in dimension 3,  we see that it suffices to have that $\alpha _2\wedge \alpha _3$ can be written as the difference of two $(2,2)$ currents smooth outside some points, and that $f_*(T_1)$ is smooth outside these points. The readers can easily work out the details.   

\section{A discussion on intersection of non-positive closed $(1,1)$ currents}

So far, with some rare exceptions such as in the author's papers mentioned above, only the intersection of positive closed currents, whose resulting current is also positive, has been systematically discussed in the literature. However, even when we are only interested in the intersection of positive closed currents, the resulting current, even when defined, cannot always be positive, for example in the case where the intersection in cohomology is negative.   We also saw  that in the proof of Theorem \ref{TheoremMain}, it is natural and crucial to work with non-positive currents. Moreover, the intersection of non-positive currents is also interesting per se. In this section we will discuss generally the intersection of currents which are differences of positive closed $(1,1)$ currents. We see previously that there is the following dilemma,  discussed after the statement of Theorem \ref{TheoremBedfordTaylor}.

{\bf Dilemma}. When working with a non-positive closed $(1,1)$ current $T$, we see previously that there is a difficulty with choosing a pair of quasi-psh functions $u^+,u^-$ so that $T-(dd^cu^+-dd^cu^-)$ is a smooth closed $(1,1)$ form. If we have such a pair, then $u^++v,u^-+v$  is another pair, where $v$ is any quasi-psh function. If we choose $v$ to be too singular, then we may obtain not desired results. Moreover, apparently there may be other choices of pairs $\widetilde{u^+}, \widetilde{u^{-}}$  which are not of the above form, and so the question is how can we compare which pairs are better.

In the remaining of this section we will discuss two approaches to resolve this dillemma. 

\subsection{Smooth approximations which are linear}

It has been observed in \cite{truong1} that the construction of Dinh-Sibony can be applied simultaneously to all currents of every bidegrees instead of just one individual current in a fixed bidegree. More precisely, there is for each $n$ a linear operator $\mathcal{S}_n$ acting on the space of DSH currents so that for each $DHS$ current $T$, the sequence $\{\mathcal{S}_n(T)\}$ is a smooth approximation of $T$. Moreover, $\mathcal{S}_n$ has many good properties, for example it is compatible with the differentials $d$ and $d^c$; and if $T$ is continuous on an open set $U\subset X$, then $\mathcal{S}_n(T)$ are also continuous on the same set $U$ and converges locally uniformly to $T$ on $U$. The linearity of $\mathcal{S}_n$ solves the dilemma. In fact, it does not matter which representation we choose, since then we have by linearity of $\mathcal{S}_n$ and its compatibility with $d$ and $d^c$ that: 
$$dd^c(\mathcal{S}_n(u_1^+)-\mathcal{S}_n(u_1^-))=\mathcal{S}_n(dd^cu_1^+-dd^cu_1^-)=\mathcal{S}_n(T_1-\Omega _1)$$   
is independent of the choice of the representation. Moreover, since $\mathcal{S}_n(\Omega _1)$ converges uniformly to $\Omega _1$, we can replace the sequence $\{\Omega _1+dd^c(\mathcal{S}_n(u_1^+)-\mathcal{S}_n(u_1^-))\}$ by the sequence $\{\mathcal{S}_n(T_1)\}$.

While this is good, it seems that this choice of good approximations may be too restrictive. One drawback is that it is unknown whether we can find such $\mathcal{S}_n$ which when applied to a quasi-psh function will produce a decreasing sequence of quasi-psh functions. Likewise, it is unknown whether there are such $\mathcal{S}_n$ which will give us Demailly's Approximation Theorem 2, which is crucial in proving some positivity of intersection (sub)measures. Our speculation is that such $\mathcal{S}_n$ do not exist.

\subsection{Representation with minimal singularities}

 The previous subsection provides a way to resolve this dilemma, but on the other hand it is unknown whether that solution will provide  Demailly's Approximations Theorems 1 and 2, which we would like to incorporate. 

An intersection theory of non-positive $(1,1)$ currents should include as a special case the intersection theory of positive closed $(1,1)$ currents. Let $T$ be a positive closed $(1,1)$ current,  then we can write $T=\Omega +dd^cu$, where $\Omega $ is smooth and $u$ is quasi-psh. Of course, we can also write more generally $T=\Omega +dd^cu^+-dd^cu^{-}$, where $u^{\pm}$ are quasi-psh functions. However, the representation $T=\Omega +dd^cu=\Omega +dd^cu-dd^c0$ is canonical in a number of ways. First, if we have two representations $T=\Omega +dd^c u=\widetilde{\Omega} +dd^c\widetilde{u}$ then $u-\widetilde{u}$ is a smooth function, and hence they have the same singularity.  Second, in any other representation $T=\Omega +dd^cu^+-dd^cu^{-}$, we have  that up to an additive constant $u^{+}=u+u^{-}$, and hence the singularity of $u$ will not be more than that of $u^{+}$. Trivially, we also have that the singularity of the function $0$ is not more than that of $u^-$. Therefore, we can say that the pair $(u,0)$ has the least singularity among all the representations $(u^+,u^-)$ of $T$. 

Now, when we work with an arbitrary current $T$ which is a difference of two positive closed $(1,1)$ currents, we extend the above observation. So we say that a pair $(u^+,u^-)$ is a canonical representation of $T$ if it has the least singularity among such pairs. Here is an idea of how to find such canonical representations. We fix the closed smooth $(1,1)$ form $\Omega$ in the representations $[T,\Omega ,u^+,u^-]$. We fix a  K\"ahler form $\omega$ on $X$. By this normalisation, for any representation $[T,\Omega ,u^+,u^-]$ the function $\phi :=u^+-u^-\in [-\infty , \infty ]$ is, up to an additive constant, independent of the representation. We note that the set $I(\phi ):=\{x\in X:~\phi (x)=-\infty\}\cup \{x\in X:~\phi (x)=\infty\}=\{x\in X:~u^+ (x)=-\infty\}\cup \{x\in X:~u^- (x)=-\infty\}$ is pluripolar, and hence does not affect whether a function is quasi-psh or not. 

Now we fix a positive closed smooth $(1,1)$ form $\omega $ on $X$. For each $n\in \mathbb{N}$, we denote by $\mathcal{R}_{\omega ,n}(T,\Omega )$ the  set of all representations $[T,\Omega ,u^+,u^-]$ so that both $u^{\pm}$ are $n\omega $-quasi psh (that is $n\omega +dd^cu^{\pm}\geq 0$). We note that $\mathcal{R}_{\omega ,n}(T,\Omega )$ is non-empty provided that $n$ is large enough. We have the following result about representations with minimal singularities. 

\begin{lemma}
There exists one $[T,\Omega ,u^+_{\omega ,n},u^{-}_{\omega ,n}]\in \mathcal{R}_{\omega ,n}(T,\Omega )$ so that for all $[T,\Omega ,u^+,u^-]\in  \mathcal{R}_{\omega ,n}(T,\Omega )$, the pair $(u^+_{\omega ,n},u^{-}_{\omega ,n})$ is not singular than the pair $(u^+,u^-)$.
 \label{LemmaMinimalSingularities}\end{lemma}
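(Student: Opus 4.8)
The plan is to construct $(u^{+}_{\omega,n},u^{-}_{\omega,n})$ as an upper envelope, in the spirit of the classical Perron-type constructions of quasi-psh functions with minimal singularities, but carried out here on the \emph{pair}. The starting point is the normalisation recorded just before the statement: once $\Omega$ is fixed, the function $\phi:=u^{+}-u^{-}$ attached to a representation $[T,\Omega,u^{+},u^{-}]$ is independent of the representation up to an additive constant. Fix one reference representation $[T,\Omega,v^{+},v^{-}]\in\mathcal{R}_{\omega,n}(T,\Omega)$ (non-empty for $n$ large, as noted above) and set $\phi:=v^{+}-v^{-}$. Then, after subtracting a suitable constant from $u^{+}$ (which changes the type of singularity of neither component), and then subtracting $\sup_X u^{-}$ from both potentials, every element of $\mathcal{R}_{\omega,n}(T,\Omega)$ can be brought to the form $(u^{-}+\phi,u^{-})$ with $u^{-}$ and $u^{-}+\phi$ both $n\omega$-psh and $u^{-}\le 0$. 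So it is enough to maximise $u^{-}$ over this class.

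Accordingly, I would put
\[
\mathcal{F}:=\{\,u^{-}\ :\ u^{-}\ \text{and}\ u^{-}+\phi\ \text{are}\ n\omega\text{-psh},\ u^{-}\le 0\,\},
\]
which is non-empty by the normalisation of the reference representation and stable under finite maxima, since $\max(u_1^{-},u_2^{-})+\phi=\max(u_1^{-}+\phi,u_2^{-}+\phi)$. Let $U:=\sup\{u^{-}:u^{-}\in\mathcal{F}\}$. By Choquet's lemma, together with stability under maxima, there is an increasing sequence $(u_j^{-})_j$ in $\mathcal{F}$ with $(\sup_j u_j^{-})^{*}=U^{*}$; set $u^{-}_{\omega,n}:=U^{*}$ and let $u^{+}_{\omega,n}$ be the upper semicontinuous regularisation of $\sup_j(u_j^{-}+\phi)$. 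The bulk of the work is to verify that $[T,\Omega,u^{+}_{\omega,n},u^{-}_{\omega,n}]\in\mathcal{R}_{\omega,n}(T,\Omega)$. Since $u_j^{-}\le 0$, the regularisation $U^{*}$ is $n\omega$-psh and agrees with $U$ off a pluripolar set; and a standard a priori estimate on the compact manifold $X$ — the supremum of an $n\omega$-psh function exceeds its mean value by at most a constant depending only on $n$ and $(X,\omega)$ — applied to $u_j^{-}+\phi$, whose mean is at most that of $\phi=v^{+}-v^{-}\in L^1(X)$ because $u_j^{-}\le 0$, shows the functions $u_j^{-}+\phi$ are uniformly bounded above, so their regularised increasing limit $u^{+}_{\omega,n}$ is $n\omega$-psh and agrees with $U+\phi$ off a pluripolar set. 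As pluripolar sets are Lebesgue-null, $u^{+}_{\omega,n}-u^{-}_{\omega,n}=\phi$ as $L^1$ functions, hence $dd^c(u^{+}_{\omega,n}-u^{-}_{\omega,n})=dd^c\phi=T-\Omega$, so the pair does define an element of $\mathcal{R}_{\omega,n}(T,\Omega)$.

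It then remains to check minimality of singularities. Given any $[T,\Omega,u^{+},u^{-}]\in\mathcal{R}_{\omega,n}(T,\Omega)$, after the normalisation above I may assume $u^{+}-u^{-}=\phi$ and $u^{-}\le C$ for a constant $C$, so that $u^{-}-C\in\mathcal{F}$. Then $u^{-}-C\le U\le U^{*}=u^{-}_{\omega,n}$ everywhere, i.e.\ $u^{-}\le u^{-}_{\omega,n}+C$; and $u^{+}-C=(u^{-}-C)+\phi\le U+\phi\le u^{+}_{\omega,n}$ off a pluripolar set, whence $u^{+}\le u^{+}_{\omega,n}+C$ everywhere because $u^{+}$ and $u^{+}_{\omega,n}$ are quasi-psh and a quasi-psh function coincides with the upper semicontinuous regularisation of its restriction to the complement of any pluripolar set. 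Thus $(u^{+}_{\omega,n},u^{-}_{\omega,n})$ is not more singular than $(u^{+},u^{-})$ in either component, which is the assertion.

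The step I expect to be the real obstacle is showing that the envelope stays inside $\mathcal{R}_{\omega,n}(T,\Omega)$, i.e.\ that maximising $u^{-}$ does not destroy the constraint that $u^{-}+\phi$ be $n\omega$-psh. The delicate point is that $\phi$ is only a difference of two quasi-psh functions, hence a priori neither upper nor lower semicontinuous, so one cannot simply set $u^{+}_{\omega,n}:=u^{-}_{\omega,n}+\phi$; the remedy is to regularise $U+\phi$ directly through the increasing sequence $u_j^{-}+\phi$ — which is precisely where the uniform upper bound above is used — and then to invoke the negligible-sets theorem of Bedford–Taylor to identify the two currents off a pluripolar set. The remaining bookkeeping, namely tracking the additive constants produced by the successive normalisations, is harmless, since "not more singular than" is by definition insensitive to bounded perturbations.
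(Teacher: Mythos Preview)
Your argument is correct and follows the same route as the paper's proof: identify $\mathcal{R}_{\omega,n}(T,\Omega)$ with the set $\{u^{-}:u^{-}\text{ and }u^{-}+\phi\text{ are }n\omega\text{-psh},\ u^{-}\le 0\}$, take the upper envelope in $u^{-}$, and set $u^{+}_{\omega,n}$ to be $u^{-}_{\omega,n}+\phi$ suitably regularised. The only difference is one of detail: the paper simply asserts that the regularised supremum stays in the class and defines $u^{+}_{\omega,n}:=u^{-}_{\omega,n}+\phi$ ``with appropriate modification on $I(\phi)$'', whereas you supply the missing justification via Choquet's lemma, the uniform upper bound on $u_j^{-}+\phi$ from the sup--mean inequality, and the negligible-sets theorem to identify the two regularisations off a pluripolar set; this is exactly the content the paper's phrase ``appropriate modification'' is hiding.
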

 \begin{proof}
 We can, up to an additive constant, identify $\mathcal{R}_{\omega ,n}(T,\Omega )$ with the set $qPSH(n\omega ,\phi)=\{u:~$ $u$ and $u+\phi$ are both $n\omega$-quasi psh, $u\leq 0\}$. In fact, we only need to define $u=u^-$. Then the regularisation $u^{-}_{\omega ,n}$ of the function $\sup _{u\in qPSH(n\omega ,\phi)}$ is also in $qPSH(n\omega ,\phi)$ (note that $\sup _u(u+\phi )=\phi +\sup _uu$ on $X\backslash I(\phi )$). Then we define $u^+_{\omega ,n}=u^-_{\omega ,n}+\phi $, with appropriate modification on the set $I(\phi )$. 
\end{proof}
 
By definition, we see that $(u^+_{\omega ,n},u^{-}_{\omega ,n})$ is less singular when $n$ increases. So somehow the "limit" of these minimal pairs give us the "canonical representation" of $T$. We now apply this to the question concerned in this paper: to define a Bedford-Taylor's type monotone convergence for $T_1$ and $T_2$,  $T_3$ differences of positive closed $(1,1)$ currents on a compact K\"ahler manifold $X$ of dimension $3$, so that $T_2\wedge T_3$ is classically defined.  Fix $\Omega _1$ a smooth closed $(1,1)$ form cohomologous to $T_1$. For each $n\in \mathbb{N}$, we define $\mathcal{G}(T_1,T_2, T_3,\Omega _1,n)$ to be the set of cluster points of sequences of signed measures $\{(\Omega _1+dd^c u^+_{\omega ,n,k}-dd^c u^-_{\omega ,n,k})\wedge T_2\wedge T_3\}_{k=1,2,\ldots }$, where $\{(u^+_{\omega ,n,k},u^-_{\omega ,n,k})\}_{k}$ is a good approximation of $(u^+_{\omega ,n},u^{-}_{\omega ,n})$. We then define the following Bedford-Taylor's type monotone convergence. 

\begin{definition} Let $X$ be a compact K\"ahler manifold of dimension $3$. Let $T_1$, $T_2$ and $T_3$ be differences of positive closed $(1,1)$ currents on $X$. Assume that $T_2\wedge T_3$ is classically defined. 

{\bf Weak BTC-B condition.}  (For weak Bedford-Taylor's type monotone convergence, version B) $T_1,T_2,T_3$ satisfy Weak BTC-B condition if there is a smooth closed $(1,1)$ form $\Omega _1$ cohomologous to $T_1$, a positive integer $n\geq 0$  and a signed measure $\mu _{T_1, T_2,T_3,\Omega _1,n}$ so that $\mathcal{G}(T_1,T_2, T_3,\Omega _1,n)=\{\mu _{T_1, T_2,T_3,\Omega _1,n}\}$. 

{\bf BTC-B condition} (For Bedford-Taylor's type monotone convergence, version B) $T_1,T_2,T_3$ satisfy  BTC-B condition if there are a signed measure $\mu _{T_1,T_2,T_3}$ and an integer $n_0$ so that for all smooth closed $(1,1)$ forms $\Omega _1$ cohomologous to $T_1$ and for all $n\geq n_0$, we have $\mathcal{G}(T_1,T_2, T_3,\Omega _1,n)=\{\mu _{T_1, T_2,T_3}\}$. 
\label{DefinitionBedfordTaylorTypeB}\end{definition}

We can also modify the above definitions, for example, to require that $\lim _{n\rightarrow\infty}\mathcal{G}(T_1,T_2, T_3,\Omega _1,n)$ exists and is a  signed measure  for some choice of $\Omega _1$ in the Weak BTC-B condition, and $\lim _{n\rightarrow\infty}\mathcal{G}(T_1,T_2, T_3,\Omega _1,n)$ exists and is a signed measure for all $\Omega _1$ in the BTC-B condition.

\end{document}